\DeclareMathOperator{\arccot}{arccot}
\DeclareMathOperator{\conv}{conv}
\DeclareMathOperator{\area}{area}
\DeclareMathOperator{\vol}{vol}
\DeclareMathOperator{\bd}{bd}
\DeclareMathOperator{\inter}{int}
\newcommand{\F}{\mathcal{F}}
\renewcommand{\Re}{\mathbb{R}}
\newcommand{\BB}{\mathbf{B}}
\newcommand{\B}{\inter (\BB^2 )}
\DeclareMathOperator{\hex}{hex}
\newtheorem{theorem}{Theorem}
\newtheorem{lemma}{Lemma}
\newtheorem{cor}{Corollary}
\newtheorem{prop}{Proposition}
\theoremstyle{definition}
\newtheorem{rem}{Remark}
\newtheorem{conj}{Conjecture}
\title[Minkowski arrangements]{On generalized Minkowski arrangements}
\author[M. Kadlicsk\'o]{M\'at\'e Kadlicsk\'o}
\author[Z. L\'{a}ngi]{Zsolt L\'{a}ngi$^*$}
\thanks{$^*$ Corresponsing author}
\address{Department of Geometry, Budapest University of Technology, Egry J\'{o}zsef utca 1., Budapest, Hungary, 1111}
\email{kadlicsko.mate@gmail.com}
\address{MTA-BME Morphodynamics Research Group and Department of Geometry, Budapest University of Technology, Egry J\'{o}zsef utca 1., Budapest, Hungary, 1111} 
\email{zlangi@math.bme.hu}
\subjclass[2010]{52C15, 52C26, 52A10}
\keywords{arrangement, Minkowski arrangement, density, homothetic copy}
\thanks{The second named author is supported by the National Research, Development and Innovation Office, NKFI, K-119670, the J\'anos Bolyai Research Scholarship of the Hungarian Academy of Sciences, and the BME IE-VIZ TKP2020 and \'UNKP-20-5 New National Excellence Programs by the Ministry of Innovation and Technology.}
\begin{document}

\begin{abstract}
The concept of a Minkowski arrangement was introduced by Fejes T\'oth in 1965 as a family of centrally symmetric convex bodies with the property that no member of the family contains the center of any other member in its interior. This notion was generalized by Fejes T\'oth in 1967, who called a family of centrally symmetric convex bodies a generalized Minkowski arrangement of order $\mu$ for some $0 < \mu < 1$ if no member $K$ of the family overlaps the homothetic copy of any other member $K'$ with ratio $\mu$ and with the same center as $K'$. In this note we prove a sharp upper bound on the total area of the elements of a generalized Minkowski arrangement of order $\mu$ of finitely many circular disks in the Euclidean plane. This result is a common generalization of a similar result of Fejes T\'oth for Minkowski arrangements of circular disks, and a result of B\"or\"oczky and Szab\'o about the maximum density of a generalized Minkowski arrangement of circular disks in the plane. In addition, we give a sharp upper bound on the density of a generalized Minkowski arrangement of homothetic copies of a centrally symmetric convex body.
\end{abstract}

\maketitle

\section{Introduction}\label{sec:intro}

The notion of a \emph{Minkowski arrangement} of convex bodies was introduced by L. Fejes T\'oth in \cite{FTL}, who defined it as a family $\F$ of centrally symmetric convex bodies in the $d$-dimensional Euclidean space $\Re^d$, with the property that no member of $\F$ contains the center of any other member of $\F$ in its interior.
He used this concept to show, in particular, that the density of a Minkowski arrangement of homothets of any given plane convex body with positive homogeneity is at most four. Here an arrangement is meant to have positive homogeneity if the set of the homothety ratios is bounded from both directions by positive constants.
It is worth mentioning that the above result is a generalization of the planar case of the famous Minkowski Theorem from lattice geometry \cite{Lek}.
Furthermore, Fejes T\'oth proved in \cite{FTL} that the density of a Minkowski arrangement of circular disks in $\Re^2$ with positive homogeneity is maximal for a Minkowski arrangement of congruent circular disks whose centers are the points of a hexagonal lattice and each disk contains the centers of six other members on its boundary.

In \cite{agg}, extending the investigation to finite Minkowski arrangements, Fejes T\'oth gave a sharp upper bound on the total area of the members of a Minkowski arrangement of finitely many circular disks, and showed that this result immediately implies the density estimate in \cite{FTL} for infinite Minkowski circle-arrangements.  Following a different direction, in \cite{FTL2} for any $0 < \mu < 1$ Fejes T\'oth defined a \emph{generalized Minkowski arrangements of order $\mu$} as a family $\F$ of centrally symmetric convex bodies with the property that for any two distinct members $K, K'$ of $\F$, $K$ does not overlap the \emph{$\mu$-core} of $K'$, defined as the homothetic copy of $K'$ of ratio $\mu$ and concentric with $K'$. In this paper he made the conjecture that for any $0 < \mu \leq \sqrt{3}-1$, the density of a generalized Minkowski arrangement of circular disks with positive homogeneity is maximal for a generalized Minkowski arrangement of congruent disks whose centers are the points of a hexagonal lattice and each disk touches the $\mu$-core of six other members of the family. According to \cite{FTL2}, this conjecture was verified by B\"or\"oczky and Szab\'o in a seminar talk in 1965, though the first written proof seems to be published only in \cite{BSz1} in 2002. It was observed both in \cite{FTL2} and \cite{BSz1} that if $\sqrt{3}-1 < \mu < 1$, then, since the above hexagonal arrangement does not cover the plane, that arrangement has no maximal density.

In this paper we prove a sharp estimate on the total area of a generalized Minkowski arrangement of finitely many circular disks, with a characterization of the equality case. Our result includes the result in \cite{agg} as a special case, and immediately implies the one in \cite{BSz1}. The proof of our statement relies on tools from both \cite{BSz1, agg}, but uses also some new ideas.  In addition, we also generalize a result from Fejes T\'oth \cite{FTL} to find a sharp upper bound on the density of a generalized Minkowski arrangement of homothetic copies of a centrally symmetric convex body.

For completeness, we mention that similar statements for (generalized) Minkowski arrangements in other geometries and in higher dimensional spaces were examined, e.g. in \cite{BSz2, FTL3, molnar}. Minkowski arrangements consisting of congruent convex bodies were considered in \cite{BO}. Estimates for the maximum cardinality of mutually intersecting members in a (generalized) Minkowski arrangement can be found in \cite{foldvari, NS, NPS, polyanskii}. The problem investigated in this paper is similar in nature to those dealing with the volume of the convex hull of a family of convex bodies, which has a rich literature. This includes a result of Oler \cite{oler} (see also \cite{BL2}), which is also of lattice geometric origin \cite{zassenhaus}, and the notion of parametric density of Betke, Henk and Wills \cite{BHW}. In particular, our problem is closely related to the notion of density with respect to outer parallel domains defined in \cite{BL2}. Applications of (generalized) Minkowski arrangements in other branches of mathematics can be found in \cite{SW, wolansky}.

As a preliminary observation, we start with the following generalization of Remark 2 of \cite{FTL}, stating the same property for (not generalized) Minkowski arrangements of plane convex bodies.
In Proposition~\ref{genmink}, by $\vol_d(\cdot)$ we denote $d$-dimensional volume, and by $\BB^d$ we denote the closed Euclidean unit ball centered at the origin.

\begin{prop}\label{genmink}
Let $0 <\mu < 1$, let $K\subset \Re^d$ be an origin-symmetric convex body and let $\mathcal{F}=\{x_1+\lambda_1 K, x_2+\lambda_2K, \dots \}$ be a generalized Minkowski arrangement of order $\mu$, where $x_i \in \Re^d, \lambda_i>0$ for each $i=1,2, \dots$. Assume that $\F$ is of positive homogeneity, that is, there are constants $0 < C_1 < C_2$ satisfying $C_1 \leq \lambda_i \leq C_2$ for all values of $i$, and define the (upper) density $\delta(\mathcal{F})$ of $\mathcal{F}$ in the usual way as
\[
\delta(\F) = \limsup_{R \to \infty} \frac{ \sum_{x_i \in R \BB^d} \vol_d(x_i + \lambda_i K)}{\vol_d(R \BB^d)},
\]
if it exists.
Then
\begin{equation}\label{eq:genmink}
\delta(\F) \leq \frac{2^d}{(1+\mu)^d},
\end{equation}
where equality is attained, e.g. if $\{ x_1, x_2, \ldots \}$ is a lattice with $K$ as its fundamental region, and $\lambda_i = 2/(1+\mu)$ for all values of $i$.
\end{prop}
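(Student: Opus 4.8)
The plan is to derive \eqref{eq:genmink} from the trivial fact that a packing has density at most $1$. Set $\alpha := \tfrac{1+\mu}{2}$, so that $0 < \alpha < 1$, and consider the shrunk family $\F_\alpha := \{\, x_i + \alpha\lambda_i K : i = 1, 2, \dots \,\}$. Since $\vol_d(x_i + \lambda_i K) = \alpha^{-d}\,\vol_d(x_i + \alpha \lambda_i K) = (2/(1+\mu))^d\,\vol_d(x_i + \alpha\lambda_i K)$ for every $i$, defining the density of $\F_\alpha$ exactly as in the statement gives $\delta(\F) = (2/(1+\mu))^d\,\delta(\F_\alpha)$. Hence it suffices to show, first, that $\F_\alpha$ is a \emph{packing} --- that is, its members have pairwise disjoint interiors --- and, second, that a packing of positive homogeneity has density at most $1$.

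The heart of the matter is that $\F_\alpha$ is a packing. I would use the following elementary, well-known criterion: for an origin-symmetric convex body $K$, reals $r, s > 0$, and points $a, b \in \Re^d$, the bodies $a + rK$ and $b + sK$ have overlapping interiors if and only if $b - a \in \inter\big((r+s)K\big)$; this follows at once from $\inter(rK) + \inter(sK) = \inter\big((r+s)K\big)$ together with the central symmetry $K = -K$. Fix $i \ne j$ and assume, without loss of generality, $\lambda_i \ge \lambda_j$. The $\mu$-core of $x_j + \lambda_j K$ is $x_j + \mu\lambda_j K$, and since $\F$ is a generalized Minkowski arrangement of order $\mu$, the body $x_i + \lambda_i K$ does not overlap it; by the criterion, $x_j - x_i \notin \inter\big((\lambda_i + \mu\lambda_j)K\big)$. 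Now the identity
\[
\alpha(\lambda_i + \lambda_j) \;=\; (\lambda_i + \mu\lambda_j) - \tfrac{1-\mu}{2}(\lambda_i - \lambda_j),
\]
together with $\mu < 1$ and $\lambda_i \ge \lambda_j$, gives $\alpha(\lambda_i + \lambda_j) \le \lambda_i + \mu\lambda_j$, hence $\inter\big(\alpha(\lambda_i + \lambda_j)K\big) \subseteq \inter\big((\lambda_i + \mu\lambda_j)K\big)$ and therefore $x_j - x_i \notin \inter\big(\alpha(\lambda_i + \lambda_j)K\big)$. By the criterion again (with $r = \alpha\lambda_i$ and $s = \alpha\lambda_j$), this says precisely that $x_i + \alpha\lambda_i K$ and $x_j + \alpha\lambda_j K$ have disjoint interiors. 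I expect this step to be the only genuine obstacle: once the shrinking factor $\tfrac{1+\mu}{2}$ has been guessed, the rest is routine bookkeeping.

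To pass from the packing property to the density bound, let $\rho$ denote the circumradius of $K$. By positive homogeneity, every member $x_i + \alpha\lambda_i K$ of $\F_\alpha$ with $x_i \in R\BB^d$ lies in $(R + \alpha C_2\rho)\BB^d$; as these members are pairwise non-overlapping, the sum of their volumes is at most $\vol_d\big((R + \alpha C_2\rho)\BB^d\big)$. Dividing by $\vol_d(R\BB^d)$ and letting $R \to \infty$ yields $\delta(\F_\alpha) \le 1$, and hence $\delta(\F) \le (2/(1+\mu))^d$.

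For the equality case, let $\{x_1, x_2, \dots\}$ be a lattice $\Lambda$ with fundamental domain $K$ and put $\lambda_i = 2/(1+\mu)$ for every $i$; this family clearly has positive homogeneity. Because the translates $x_i + K$ tile $\Re^d$, for every nonzero $\ell \in \Lambda$ we have $\ell \notin \inter(K) - \inter(K) = \inter(2K)$. The $\mu$-core of $x_j + \tfrac{2}{1+\mu}K$ is $x_j + \tfrac{2\mu}{1+\mu}K$, and $\tfrac{2}{1+\mu} + \tfrac{2\mu}{1+\mu} = 2$, so by the criterion above $x_i + \tfrac{2}{1+\mu}K$ does not overlap the $\mu$-core of $x_j + \tfrac{2}{1+\mu}K$ whenever $i \ne j$; thus this family is a generalized Minkowski arrangement of order $\mu$. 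Its density equals $\vol_d\big(\tfrac{2}{1+\mu}K\big)/\vol_d(K) = (2/(1+\mu))^d$, since $\vol_d(K)$ is the covolume of $\Lambda$, so equality holds in \eqref{eq:genmink}.
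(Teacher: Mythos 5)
Your proof is correct and takes essentially the same approach as the paper: both shrink each member by the factor $(1+\mu)/2$, show that the generalized Minkowski condition makes the shrunk family a packing (your overlap criterion is just the set-theoretic form of the paper's inequality $\Vert x_i-x_j\Vert_K \geq \frac{1+\mu}{2}(\lambda_i+\lambda_j)$, with the paper averaging the two bounds $\lambda_i+\mu\lambda_j$ and $\lambda_j+\mu\lambda_i$ where you take the larger one), and conclude from the density bound for packings. You merely spell out the routine steps --- that a packing of positive homogeneity has density at most $1$, and the verification of the equality example --- which the paper leaves implicit.
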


\begin{proof}
Note that the equality part of Proposition~\ref{genmink} clearly holds, and thus, we prove only the inequality in (\ref{eq:genmink}).
Let $|| \cdot ||_K: \Re^d\rightarrow  [ 0, \infty)$ denote the norm with $K$ as its unit ball. Then, by the definition of a generalized Minkowski arrangement, we have
\[
||x_i-x_j||_K \geq \max \{ \lambda_i + \mu \lambda_j , \lambda_j + \mu
\lambda_i \} \geq
\]
\[
\geq \frac{1}{2} \left( (\lambda_i + \mu \lambda_j)  +  (\lambda_j + \mu \lambda_i ) \right) = \frac{1+\mu}{2} (\lambda_i+\lambda_j),
\]
implying that the homothets $x_i + (\lambda_i/2) \cdot \left(1+\mu\right)  K$ are pairwise non-overlapping. In other words, the family
$\mathcal{F}'=\left\{ x_i + (\lambda_i/2) \cdot \left(1+\mu\right)  K:  i=1,2,\ldots \right\}$ is a packing. Thus, the density of $\mathcal{F}'$ is at most one,
from which (\ref{eq:genmink}) readily follows. Furthermore, if $K$ is the fundamental region of a lattice formed by the $x_i$'s and $\lambda_i = 2/(1+\mu)$ for all values of $i$, then $\mathcal{F}'$ is a tiling, implying the equality case.
\end{proof}

Following the terminology of Fejes T\'oth in \cite{agg} and to permit a simpler formulation of our main result, in the remaining part of the paper we consider generalized Minkowski arrangements of \emph{open} circular disks, where we note that generalized Minkowski arrangements can be defined for families of open circular disks in the same way as for families of closed circular disks.
%In our investigation, we denote the open unit circular disk centered at the origin by $\B^2$.

To state our main result, we need some preparation, where we denote the boundary of a set by $\bd (\cdot)$. Consider some generalized Minkowski arrangement $\F = \{ B_i = x_i + \rho_i \B : i=1,2,\ldots, n \}$  of open circular disks in $\Re^2$ of order $\mu$, where $0 < \mu < 1$.
Set $U(\F)= \bigcup_{i=1}^n B_i = \bigcup \F$. Then each circular arc $\Gamma$ in $\bd (U(\F))$ corresponds to a circular sector, which can be obtained as the union of the segments connecting a point of $\Gamma$ to the center of the disk in $\F$ whose boundary contains $\Gamma$. We call the union of these circular sectors the \emph{outer shell} of $\F$. Now consider a point $p \in \bd(U(\F))$ belonging to at least two members of $\F$, say $B_i$ and $B_j$, such that $x_i, x_j$ and $p$ are not collinear. Assume that the convex angular region bounded by the two closed half lines starting at $p$ and passing through $x_i$ and $x_j$, respectively, do not contain the center of another element of $\F$ in its interior which contains $p$ on its boundary. We call the union of the triangles $\conv \{ p,x_i,x_j \}$ satisfying these conditions the \emph{inner shell} of $\F$. We denote the inner and the outer shell of $\F$ by $I(\F)$ and $O(\F)$, respectively. Finally, we call the set $C(\F) = U(\F) \setminus (I(\F) \cup O(\F))$ the \emph{core} of $\F$ (cf. Figure~\ref{fig:core-shell}). Clearly, the outer shell of any generalized Minkowski arrangement of open circular disks is nonempty, but there are arrangements for which $I(\F) = \emptyset$ or $C(\F) = \emptyset$.

\begin{figure}[ht]
\begin{center}
\includegraphics[width=0.9\textwidth]{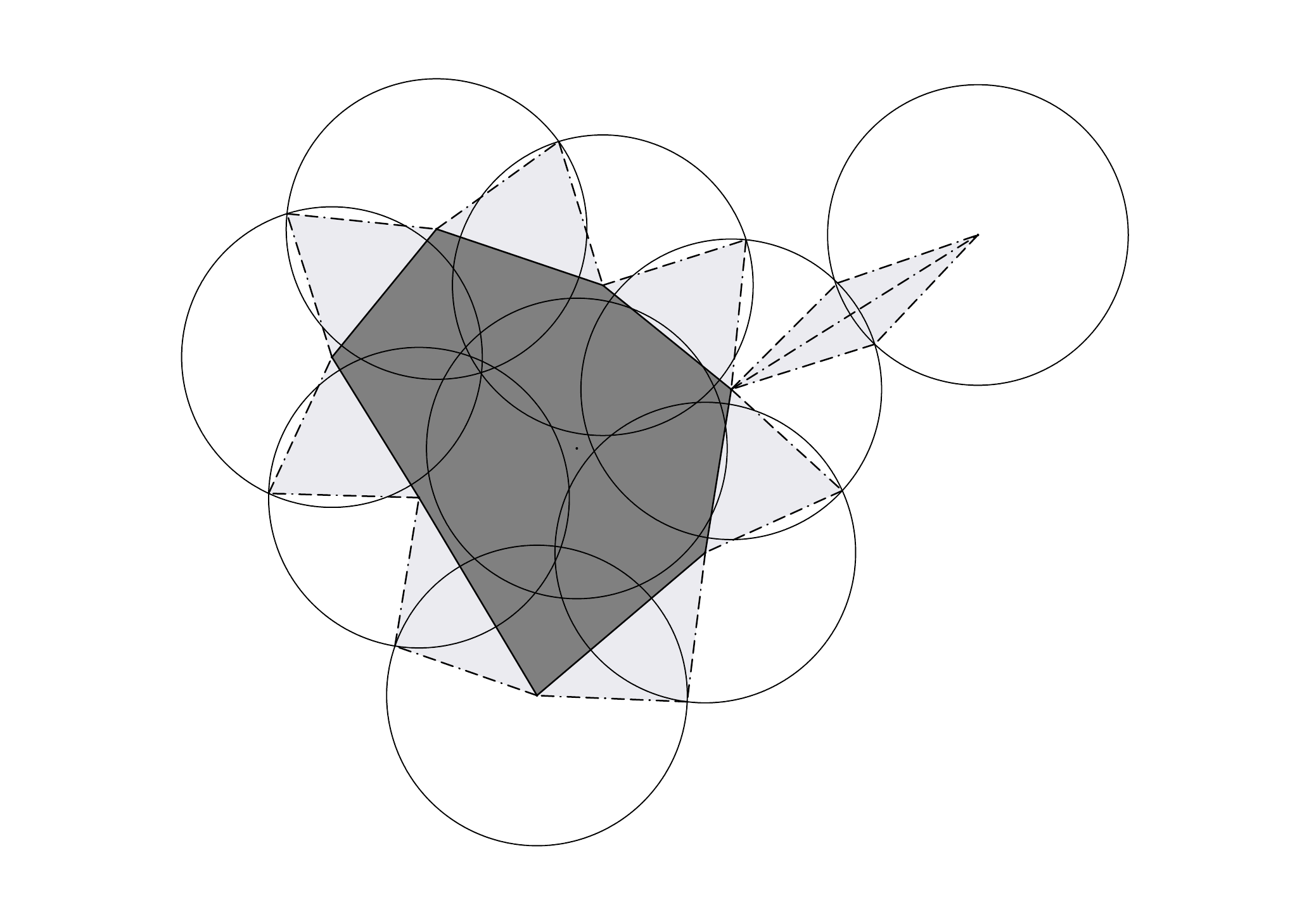} 
\caption{The outer and inner shell, and the core of an arrangement, shown in white, light grey and dark grey, respectively.}
\label{fig:core-shell}
\end{center}
\end{figure}

If the intersection of two members of $\F$ {is nonempty}, then we call this intersection a \emph{digon}. If a digon touches the $\mu$-cores of both disks defining it, we call the digon \emph{thick}. A digon which is not contained in a third member of $\F$ is called a \emph{free digon}.
Our main theorem is as follows, where $\area(X)$ denotes the area of the set $X$.

\begin{theorem}\label{thm:main}
Let $0 < \mu  \leq \sqrt{3}-1$, and let $\F = \{ B_i= x_i + \rho_i \B : i=1,2,\ldots, n \}$ be a generalized Minkowski arrangement of finitely many {open} circular disks of order $\mu$. Then
\[
T = \pi \sum_{i=1}^n \rho_i^2  \leq  \frac{2\pi }{\sqrt{3}(1+\mu)^2}\area(C(\mathcal{F}))+
\] 
\[
+\frac{4\cdot \arccos({\frac{1+\mu}{2})}}{(1+\mu)\cdot \sqrt{(3+\mu)(1-\mu)}}\area(I(\mathcal{F}))+\area(O(\mathcal{F})),
\]
where $T$ is the total area of the circles, with equality if and only if each free digon in $\F$ is thick. 
\end{theorem}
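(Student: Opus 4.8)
The plan is to reduce the global area estimate to a local one attached to a suitable decomposition of $U(\F)$. First I would partition $U(\F)$ into cells, one for each disk $B_i$, so that the portion of $B_i$ in its own cell, together with the contributions of the outer shell, inner shell and core meeting that cell, can be compared separately. The natural candidate is a weighted Dirichlet--Voronoi type decomposition: using the norm-distance governed by the generalized Minkowski condition, assign to $B_i$ the region of points that are "closer" to $x_i$ in the sense dictated by the homothety ratios $\rho_i$ and the order $\mu$. The generalized Minkowski property $\|x_i-x_j\|\ge \max\{\rho_i+\mu\rho_j,\rho_j+\mu\rho_i\}$ guarantees that each such cell contains the corresponding $\mu$-core, and that neighbouring cells interact only through digons; this is exactly the setup used in \cite{BSz1} and \cite{agg}, and I would lean on their constructions.

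The heart of the argument is then a local inequality: for each cell $V_i$,
\[
\pi\rho_i^2 \le \frac{2\pi}{\sqrt3(1+\mu)^2}\area(V_i\cap C(\F)) + \frac{4\arccos\frac{1+\mu}{2}}{(1+\mu)\sqrt{(3+\mu)(1-\mu)}}\area(V_i\cap I(\F)) + \area(V_i\cap O(\F)).
\]
To prove this I would further cut $V_i$ radially, from $x_i$ toward the vertices and edge-contact points on $\bd V_i$, into triangular and circular-sector pieces. Each sector piece lies in the outer shell and contributes with coefficient $1$, matching the last term exactly. Each triangular piece is split by the arc $\bd B_i$ into an inner part (a thin circular triangle near the boundary, going into $I(\F)$) and a part of the core; on such a piece one writes the area of the disk sector over the area of the triangle and optimizes. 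The extremal configuration is the regular hexagonal one where each digon is thick, i.e. where the half-angle at $x_i$ equals $\arccos\frac{1+\mu}{2}$ and the neighbour is at distance $(1+\mu)\rho_i$ (using $\rho_j=\rho_i$); the constants $\frac{2\pi}{\sqrt3(1+\mu)^2}$ and $\frac{4\arccos\frac{1+\mu}{2}}{(1+\mu)\sqrt{(3+\mu)(1-\mu)}}$ are precisely the ratios (sector area)/(triangle area) in the core and inner-shell parts of that hexagonal cell. The restriction $\mu\le\sqrt3-1$ enters here: it is the condition under which the hexagonal cell still covers its share, equivalently under which the relevant monotonicity of the angle-to-area ratio holds and the core coefficient dominates the shell coefficient. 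I would verify, by a one-variable calculus argument in the half-angle $\varphi$ (with the distance to the neighbour as a second parameter constrained by the Minkowski inequality), that the worst case is attained at the thick-digon value, and that strict inequality holds otherwise unless every free digon is thick.

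The main obstacle I anticipate is the bookkeeping at points where three or more disks meet, and at reflex situations where the radial decomposition of $V_i$ is not convex: one must check that the triangles $\conv\{p,x_i,x_j\}$ defining $I(\F)$ exactly tile the inner parts of the cells without overlap or omission, using the non-containment condition in the definition of the inner shell. A second delicate point is handling digons that are \emph{not} free (contained in a third disk): these are the cause of the inner shell being "charged" at a higher rate than the core, and one must argue that absorbing them into the estimate never increases the bound beyond what the inner-shell coefficient allows — this is where the hypothesis $\mu \le \sqrt{3}-1$ is used a second time, to ensure the inner-shell coefficient is at least the core coefficient, so that over-counting area as "inner shell" is always safe. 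Once the local inequality is established with its equality analysis, summing over $i$ and using that the cells tile $U(\F)$ while the outer shell, inner shell and core partition $U(\F)$ gives the theorem, with equality exactly when every local inequality is tight, i.e. when every free digon is thick.
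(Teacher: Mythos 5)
Your high-level strategy (reduce to local density estimates whose extremal case is the thick-digon configuration, then sum) is the right one, but as written the plan has two genuine gaps. First, the decomposition is not actually constructed. You posit a weighted Dirichlet--Voronoi partition into cells $V_i$ and then need $V_i\cap C(\F)$, $V_i\cap I(\F)$, $V_i\cap O(\F)$ to be exactly the pieces on which your radial triangulation lives; but $I(\F)$ and $C(\F)$ are defined via vertices of digons and boundary points of $U(\F)$, not via any distance function, and a multiplicatively weighted Voronoi diagram has non-convex, possibly disconnected cells with curved boundaries, which breaks the radial cutting. The paper avoids cells altogether: it attaches a \emph{shell triangle} $[x_i,x_j,q]$ to each vertex $q$ of a free digon lying on $\bd(U(\F))$, and a \emph{core polygon} (the convex hull of the centers of all disks touched from inside by the maximal inscribed disk at $q$) to each interior vertex $q$, and then proves these tile $I(\F)$ and $C(\F)$ respectively; this is the bookkeeping you flag as "the main obstacle" but do not resolve. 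Second, you underestimate the local estimates. The shell estimate is a genuine two-parameter problem in the radius ratio $\rho$ and in $\mu$ (the paper's Lemma~\ref{shell}, settled partly by interval estimates on a numerical grid), and the core estimate concerns a triangle spanned by \emph{three} centers whose disks cover it -- proving it requires the Knaster--Kuratowski--Mazurkiewicz lemma, a deformation to the extremal side lengths, a rescaling ("$\mu$-increasing") process, and an iterated reflection argument (Lemma~\ref{core}). A one-variable optimization in the half-angle at $x_i$ does not capture the configuration of three mutually overlapping disks of different radii, which is where the hypothesis $\mu\le\sqrt3-1$ really enters (via Lemma~\ref{nocore}: for larger $\mu$ three disks in a $\mu$-arrangement cannot cover the triangle of their centers, so the core is empty).

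Two smaller points. Your comparison of the coefficients is reversed: on $[0,\sqrt3-1]$ the core coefficient $\frac{2\pi}{\sqrt3(1+\mu)^2}$ is at least the inner-shell coefficient $\frac{4\arccos\frac{1+\mu}{2}}{(1+\mu)\sqrt{(3+\mu)(1-\mu)}}$, with equality only at $\mu=\sqrt3-1$ (this inequality is in fact used inside the proof of the core lemma), although the conclusion you draw from it -- that misclassifying core area as inner-shell area is safe -- happens to point the right way. Finally, the equality case needs more than "every local inequality is tight": the paper must argue that a core polygon can be triangulated from any of its vertices, forcing all sides and diagonals to have equal length and hence all core polygons to be regular triangles, and the converse direction (all free digons thick implies equality) needs the separate fact that all disks in a connected component are then congruent (Lemma~\ref{3}); your proposal does not address the converse at all.
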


In the paper, for any points $x,y,z \in \Re^2$, we denote by $[x,y]$ the closed segment with endpoints $x,y$, by $[x,y,z]$ the triangle $\conv \{ x,y,z \}$, by $|x|$ the Euclidean norm of $x$, and if $x$ and $z$ are distinct from $y$, by $\angle xyz$ we denote the measure of the angle between the closed half lines starting at $y$ and passing through $x$ and $z$. Note that according to our definition, $\angle xyz$ is at most $\pi$ for any $x,z \neq y$.
% {By $\bd(\cdot)$ we denote the boundary of a set.} Furthermore,
For brevity we call {an open} circular disk a \emph{disk}, and a generalized Minkowski arrangement of disks of order $\mu$ a \emph{$\mu$-arrangement}. Throughout Sections~\ref{sec:prelim} and \ref{sec:proof} we assume that $0 < \mu \leq \sqrt{3}-1$.

In Section~\ref{sec:prelim}, we prove some preliminary lemmas. In Section~\ref{sec:proof}, we prove Theorem~\ref{thm:main}. Finally, in Section~\ref{sec:remarks}, we collect additional remarks and questions.

\section{Preliminaries}\label{sec:prelim}

For any $B_i, B_j \in \F$, if ${B_i \cap B_j} \neq \emptyset$, we call the two intersection points of $\bd(B_i)$ and $\bd(B_j)$ the \emph{vertices} of the digon $B_i \cap B_j$.

First, we recall the following lemma of Fejes T\'oth \cite[Lemma~2]{agg}. To prove it, we observe that for any $\mu > 0$, a generalized Minkowski arrangement of order $\mu$ is a Minkowski arrangement as well.

\begin{lemma}\label{2}
Let $B_i, B_j, B_k \in \F$ such that the digon $B_i \cap B_j$ is contained in $B_k$. Then the digon $B_i \cap B_k$ is free (with respect to $\F$).
\end{lemma}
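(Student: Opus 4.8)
The plan is to argue by contradiction: assume $B_i\cap B_k$ is not free, so $B_i\cap B_k\subseteq B_l$ for some $B_l\in\F$ with $l\notin\{i,k\}$. First normalise the indices. The hypothesis is only meaningful when $B_k\neq B_i,B_j$, so take $i,j,k$ distinct. I rule out $l=j$: combined with $B_i\cap B_j\subseteq B_k$, the relation $B_i\cap B_k\subseteq B_j$ gives $B_i\cap B_j=B_i\cap B_k\ (=B_i\cap B_j\cap B_k)$, so these two open digons have the same closure and the same boundary; the arc of $\bd B_i$ bounding them is shared, so the complementary boundary arc — lying on $\bd B_j$ for the first digon and on $\bd B_k$ for the second — is a single nondegenerate circular arc, which determines its circle, forcing $B_j=B_k$, a contradiction. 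Hence $i,j,k,l$ are pairwise distinct; and since a generalized Minkowski arrangement of any order is a Minkowski arrangement, none of $x_j,x_k,x_l$ lies in $B_i$, and $x_i$ lies in none of $B_j,B_k,B_l$.

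Next I would set up some arc bookkeeping. If a disk $B\in\F$ contains a digon $B_i\cap B_m$ (with $m\in\{j,k,l\}$), then it contains the whole $\bd B_i$-arc of that digon, and that arc is a \emph{minor} arc of $\bd B_i$: an arc of $\bd B_i$ longer than a semicircle contains two antipodal points of $\bd B_i$, hence, by convexity of $B$, contains $x_i$, contradicting $x_i\notin B$. From $B_i\cap B_j\subseteq B_k$ and $B_i\cap B_k\subseteq B_l$ one obtains the chain $B_i\cap B_j\subseteq B_i\cap B_k\subseteq B_i\cap B_l$, so the corresponding $\bd B_i$-arcs $\gamma\subseteq\delta\subseteq\epsilon$ are nested minor arcs of $\bd B_i$; moreover $\gamma\subsetneq\delta$, since $\gamma=\delta$ would make $B_i\cap B_j$ and $B_i\cap B_k$ share their $\bd B_i$-arc while one contains the other, and the argument of the first paragraph would again give $B_j=B_k$. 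Symmetrically, using $x_j\notin B_k$, $x_k\notin B_l$, etc., the analogous boundary arcs on $\bd B_j$ and on $\bd B_k$ are also minor and suitably nested.

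The heart of the argument is then a coordinate contradiction. Place coordinates so that the common chord of $B_i$ and $B_j$ is the vertical segment from $(0,-1)$ to $(0,1)$; then $x_i$ and $x_j$ lie on the $x$-axis, and the Minkowski inequality between $B_i$ and $B_j$ forces them onto opposite sides of the origin. A convenient auxiliary fact is that for any overlapping $B,B'\in\F$, the digon $B\cap B'$ is contained in the disk having the segment between its two vertices as a diameter — take the convex combination of the two defining quadratic inequalities whose weight annihilates the linear term, the weight lying in $[0,1]$ precisely because the midpoint of the common chord lies between the two centres, which in turn follows from the Minkowski property — and that this disk is the smallest disk containing the closed digon (its two vertices being antipodal on the boundary). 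In particular $\rho_k>1$, for $\rho_k=1$ would make $\overline{B_k}$ a radius-$1$ disk through the antipodal pair $(0,\pm1)$, hence centred at the origin, so $x_k$ would be an interior point of $B_i$. Now convert the constraints ``$B_k\supseteq B_i\cap B_j$, $x_i\notin B_k$, $x_j\notin B_k$, $x_k\notin B_i$, $x_k\notin B_j$'' into inequalities for $x_k=(u,v)$ and $\rho_k$; comparing the radius lower bound from covering a vertex of the digon with the upper bounds coming from $x_i,x_j\notin B_k$ already pins $x_k$ into a narrow region. Feeding in the corresponding conditions for $B_l$ relative to the \emph{enlarged} digon $B_i\cap B_k$, together with the strict inclusion $\gamma\subsetneq\delta$, one checks that the resulting system has no solution.

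The whole difficulty is concentrated in this last step. The pair $(B_i,B_j)$ forms a two-parameter family constrained only by a single Minkowski inequality, the admissible location of $x_k$ breaks into a few subcases, and for each subcase ruling out a covering disk $B_l$ requires handling several quadratic conditions at once. I expect the efficient organisation is to first extract from ``$B_i\cap B_j\subseteq B_k$'' a rigidity statement — essentially that this inclusion forces the triple $B_i,B_j,B_k$ to be (nearly) extremal, with $\bd B_k$ hugging the boundary of the minimal disk of $B_i\cap B_j$ — and then to show that in any such configuration the digon $B_i\cap B_k$ protrudes beyond $B_j$ so far that a disk containing its closure would be forced to contain $x_i$ or $x_k$, contradicting the Minkowski property.
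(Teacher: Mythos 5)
First, a point of reference: the paper does not actually prove this lemma---it recalls it verbatim from Fejes T\'oth \cite{agg} (Lemma~2 there), adding only the observation that a generalized Minkowski arrangement of order $\mu>0$ is in particular a Minkowski arrangement, so the order-$0$ statement applies. Your proposal correctly mirrors that one observation (you never use the $\mu$-core condition, only the Minkowski property), and your preliminary reductions are all sound: the elimination of $l=j$ via $B_i\cap B_j=B_i\cap B_k\Rightarrow B_j=B_k$; the fact that each boundary arc involved is shorter than a semicircle (otherwise a disk would contain a center); the nesting $\gamma\subseteq\delta\subseteq\epsilon$; the fact that a digon of a Minkowski arrangement lies in the disk having its two vertices as a diameter; and the consequence $\rho_k>1$ in your normalization.

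However, there is a genuine gap, and it sits exactly where the lemma lives. After the setup you write that one ``converts the constraints into inequalities'' and that ``one checks that the resulting system has no solution,'' and you then concede that ``the whole difficulty is concentrated in this last step.'' That last step \emph{is} the lemma: an infeasibility claim for a system of quadratic constraints in the eight free parameters $(x_i,\rho_i,x_j,\rho_j)$ (modulo similarity), $(x_k,\rho_k)$ and $(x_l,\rho_l)$, and nothing you have established reduces it to a finite or routine check. Moreover, the ``rigidity statement'' you propose as the organizing principle---that $B_i\cap B_j\subseteq B_k$ forces $\bd(B_k)$ to hug the minimal disk of the digon---is false: for two unit disks with $|x_i-x_j|=2-\varepsilon$, the admissible $(x_k,\rho_k)$ form a two-parameter family in which $\rho_k$ ranges over an interval of length on the order of $\varepsilon^{-1/2}$, so near the upper end $\bd(B_k)$ is almost a straight line through a digon whose minimal disk has radius of order $\varepsilon^{1/2}$. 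Any workable substitute must be quantitative (e.g.\ controlling how far the digon $B_i\cap B_k$ protrudes beyond $B_j$ as a function of the data), and extracting and exploiting such an estimate is essentially the whole content of Fejes T\'oth's proof. As it stands, the proposal is a correct reduction plus an unverified claim, not a proof.
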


From now on, we call the maximal subfamilies $\F'$ of $\F$ (with respect to containment) with the property that {$\bigcup_{B_i \in \F'} B_i$} is connected the \emph{connected components} of $\F$. Our next lemma has been proved by Fejes T\'oth in \cite{agg} for Minkowski arrangements of order $\mu=0$. His argument can be applied to prove Lemma~\ref{3} for an arbitrary value of $\mu$. Here we include this proof for completeness.
 
\begin{lemma}\label{3}
If $\F'$ is a connected component of $\F$ in which each free digon is thick, then the elements of $\F'$ are congruent.
\end{lemma}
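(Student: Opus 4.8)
The plan is to isolate two facts. The first is a purely metric observation: \emph{a thick digon forces its two disks to be congruent}. Write $d_{ij}=|x_i-x_j|$. Since $\F$ is a $\mu$-arrangement (hence also a Minkowski arrangement), for $B_i\neq B_j$ we have $d_{ij}\ge\max\{\rho_i+\mu\rho_j,\ \rho_j+\mu\rho_i\}$; in particular no member contains the center of another and $B_i\cap B_j$, when nonempty, is a genuine lens. The $\mu$-core of $B_j$ is the disk of radius $\mu\rho_j$ about $x_j$, and the generalized Minkowski condition already forbids $B_i$ from overlapping it, so $B_i\cap B_j$ \emph{touches} the $\mu$-core of $B_j$ precisely when $d_{ij}=\rho_i+\mu\rho_j$, and symmetrically it touches the $\mu$-core of $B_i$ precisely when $d_{ij}=\rho_j+\mu\rho_i$. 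Thus if $B_i\cap B_j$ is thick, then $\rho_i+\mu\rho_j=\rho_j+\mu\rho_i$, and since $0<\mu<1$ this yields $\rho_i=\rho_j$.

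The second fact is that the ``overlap graph'' of a connected component is connected. Since the members of $\F'$ are open and $\bigcup\F'$ is connected, the graph $G$ whose vertices are the members of $\F'$ and whose edges join pairs $B_i,B_j$ with $B_i\cap B_j\neq\emptyset$ is connected: any partition of the vertex set into two nonempty parts with no edges between them would split $\bigcup\F'$ into two disjoint nonempty open sets. Hence it suffices to show $\rho_i=\rho_j$ along every edge $B_iB_j$ of $G$, after which constancy of the radius propagates across $G$ and gives the claim.

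So I would finish by a short case analysis on an edge $B_iB_j$ of $G$. If $B_i\cap B_j$ is free, then it is thick by hypothesis and the first fact applies. (Note that because $\F'$ is a \emph{connected component}, a digon of two members of $\F'$ that is contained in some member of $\F$ is automatically contained in a member of $\F'$, so ``free in $\F$'' and ``free in $\F'$'' coincide here.) If $B_i\cap B_j$ is not free, then by definition $B_i\cap B_j\subseteq B_k$ for some $B_k\in\F$ distinct from $B_i$ and $B_j$; this $B_k$ meets $B_i$, hence $B_k\in\F'$, and Lemma~\ref{2} (applied once as stated and once with the roles of $B_i,B_j$ interchanged) shows that the nonempty proper digons $B_i\cap B_k$ and $B_k\cap B_j$ are both free, hence thick, hence $\rho_i=\rho_k=\rho_j$. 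The only place demanding care is this non-free case — keeping track of which auxiliary disk exists, that it lies in the component, and that the two new digons are free and nondegenerate — but this is precisely what Lemma~\ref{2} is designed to supply; the thick-digon computation itself is routine.
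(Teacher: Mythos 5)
Your proposal is correct and follows essentially the same route as the paper's proof: reduce to overlapping pairs via connectedness of the overlap graph, handle a free digon directly (thick $\Rightarrow$ $\rho_i+\mu\rho_j=\rho_j+\mu\rho_i$ $\Rightarrow$ $\rho_i=\rho_j$), and handle a non-free digon by passing to the containing disk $B_k$ and applying Lemma~\ref{2} twice. You merely make explicit two steps the paper leaves implicit (the metric computation behind ``thick implies congruent'' and the graph-connectivity argument), which is fine.
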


\begin{proof}
We need to show that for any $B_i, B_j \in \F'$, $B_i$ and $B_j$ are congruent. Observe that by connectedness, we may assume that $B_i \cap B_j$ is a digon.
If $B_i\cap B_j$ is free, then it is thick, which implies that $B_i$ and $B_j$ are congruent.
If $B_i\cap B_j$ is not free, then there is a disk $B_k \in \F'$ containing it. By Lemma \ref{2}, the digons $B_i\cap B_k$ and $B_j\cap B_k$ are free. Thus $B_k$ is congruent to both $B_i$ and $B_j$. 
\end{proof}

In the remaining part of Section~\ref{sec:prelim}, we examine densities of some circular sectors in certain triangles. The computations in the proofs of these lemmas were carried out by a Maple 18.00 software. 

\begin{lemma}\label{shell_prelim}
Let $0 < \gamma < \pi$ and $A,B > 0$ be arbitrary. Let $T= [x,y,z]$ be a triangle such that $\angle xzy = \gamma$, and $|x-z|=A$ and $|y-z|=B$.
Let $\Delta=\Delta(\gamma,A,B)$, $\alpha=\alpha(\gamma,A,B)$ and $\beta=\beta(\gamma,A,B)$ denote the functions with variables $\gamma, A, B$ whose values are the area and the angles of $T$ at $x$ and $y$, respectively, and set $f_{A,B}(\gamma) = \left(\alpha A^2 + \beta B^2\right)/{\Delta}$. Then, for any $A,B > 0$, the function $f_{A,B}(\gamma)$ is strictly decreasing on the interval $\gamma \in (0,\pi)$.
\end{lemma}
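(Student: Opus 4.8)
The plan is to compute $f_{A,B}'(\gamma)$, keeping $A,B$ fixed, and to show it is negative on all of $(0,\pi)$. By the symmetry $f_{A,B}=f_{B,A}$ we may assume $A\ge B$. Placing $z$ at the origin with $x=(A,0)$ and $y=(B\cos\gamma,B\sin\gamma)$, we have $\Delta=\tfrac12 AB\sin\gamma$, and — since $A\ge B$ keeps $A-B\cos\gamma>0$ throughout $(0,\pi)$ — the angle $\alpha$ is the acute angle $\arctan\frac{B\sin\gamma}{A-B\cos\gamma}$, while $\beta=\pi-\gamma-\alpha$. Writing $D=D(\gamma)=|x-y|$, so that $D^2=A^2+B^2-2AB\cos\gamma$, a routine differentiation gives
\[
\frac{d\alpha}{d\gamma}=\frac{B(A\cos\gamma-B)}{D^2},\qquad \frac{d\beta}{d\gamma}=\frac{A(B\cos\gamma-A)}{D^2},\qquad \frac{d\Delta}{d\gamma}=\tfrac12 AB\cos\gamma ,
\]
which as a check satisfy $\frac{d\alpha}{d\gamma}+\frac{d\beta}{d\gamma}=-1$.

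Feeding these into the quotient rule and using $\frac{d\alpha}{d\gamma}A^2+\frac{d\beta}{d\gamma}B^2=\frac{AB\left[(A^2+B^2)\cos\gamma-2AB\right]}{D^2}$, one sees that $f_{A,B}'(\gamma)$ has the same sign as
\[
\Phi(\gamma):=\frac{AB\left[(A^2+B^2)\cos\gamma-2AB\right]\sin\gamma}{D^2}-\left(\alpha A^2+\beta B^2\right)\cos\gamma ,
\]
so the task is to prove $\Phi(\gamma)<0$ for all $\gamma\in(0,\pi)$. If $\cos\gamma\ge 0$ and $(A^2+B^2)\cos\gamma\le 2AB$, then the first term of $\Phi$ is $\le 0$ and the second is $\ge 0$, and since they cannot both vanish, $\Phi<0$ in this range. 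The remaining ranges — $\cos\gamma<0$, and $\cos\gamma>2AB/(A^2+B^2)$ — are not automatic: there the two summands of $\Phi$ partially cancel, and what keeps $\Phi$ negative is that $\alpha$ and $\beta$ cannot grow too quickly.

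To treat $(0,\pi)$ uniformly I would eliminate $A$ and $B$ via the law of sines, substituting $A=2R\sin\beta$, $B=2R\sin\alpha$, $D=2R\sin\gamma$ with $R$ the circumradius of $[x,y,z]$. Clearing denominators (the resulting factor $(2R)^2/\sin\gamma$ being positive), $\Phi(\gamma)<0$ becomes equivalent to the symmetric trigonometric inequality
\[
H(\alpha,\beta,\gamma):=\sin\gamma\cos\gamma\left(\alpha\sin^2\beta+\beta\sin^2\alpha\right)-\cos\gamma\,\sin\alpha\sin\beta\left(\sin^2\alpha+\sin^2\beta\right)+2\sin^2\alpha\sin^2\beta>0
\]
in the variables $\alpha,\beta,\gamma>0$ with $\alpha+\beta+\gamma=\pi$. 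Writing $H=\cos\gamma\cdot Q+2\sin^2\alpha\sin^2\beta$, where $Q:=\sin\gamma\left(\alpha\sin^2\beta+\beta\sin^2\alpha\right)-\sin\alpha\sin\beta\left(\sin^2\alpha+\sin^2\beta\right)$, the inequality is immediate when $\cos\gamma\cdot Q\ge 0$; otherwise I would estimate $|\cos\gamma\cdot Q|$ against $2\sin^2\alpha\sin^2\beta$, using the elementary fact that $\sin t-t\cos t>0$ on $(0,\pi)$ (equivalently $\tan t>t$ on $(0,\pi/2)$). For instance, in the diagonal case $\alpha=\beta$ one computes $H=2\sin^3\alpha\cos\alpha\,(\sin 2\alpha-2\alpha\cos 2\alpha)$, which is positive on $(0,\pi/2)$ precisely because $\sin t-t\cos t>0$. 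Since $H>0$ strictly on the open parameter region, $\Phi<0$ strictly, and $f_{A,B}$ is strictly decreasing.

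The crux — and the step the authors carry out with a computer algebra system — is verifying $H>0$ in the regime where $\cos\gamma$ and $Q$ have opposite signs, in particular for $\gamma$ obtuse: crude bounds do not suffice, the competition between $|\cos\gamma\cdot Q|$ and $2\sin^2\alpha\sin^2\beta$ is tight, and the winning margin comes from the transcendental input $\sin t-t\cos t>0$ rather than from algebra alone. Organizing the case distinctions so that what remains is a finite (if lengthy) computation is the main difficulty.
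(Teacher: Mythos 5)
Your reduction is set up correctly and the computations that you do carry out all check: the formulas for $\alpha'$, $\beta'$, $\Delta'$ and $g'=\alpha'A^2+\beta'B^2$ are right, the sign of $f_{A,B}'$ is indeed that of your $\Phi$, and the Law of Sines substitution does convert $\Phi<0$ into the symmetric inequality $H(\alpha,\beta,\gamma)>0$ on the open simplex $\alpha+\beta+\gamma=\pi$ (I verified the algebra, including the diagonal identity $H=2\sin^3\alpha\cos\alpha\,(\sin 2\alpha-2\alpha\cos 2\alpha)$). The problem is that this is where the proof stops. The only regimes you actually dispose of are (a) $\cos\gamma\cdot Q\ge 0$, and (b) the one-dimensional slice $\alpha=\beta$. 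The hard regime is nonempty and is exactly where the statement lives: for instance, as $\gamma\to\pi^-$ one has $Q\sim 2\alpha^2\beta^2>0$ and $\cos\gamma<0$, so $\cos\gamma\cdot Q<0$ and $H\sim 2\alpha^2\beta^2(1+\cos\gamma)\to 0$ — the two terms of $H$ cancel to leading order, so no crude bound on $|\cos\gamma\cdot Q|$ by $2\sin^2\alpha\sin^2\beta$ can work uniformly. You acknowledge this yourself ("I would estimate\dots", "the main difficulty"), but no estimate is supplied, so the lemma is not proved. A two-variable inequality that degenerates on the boundary is not a "finite computation" that can be waved at; it needs an actual argument.

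For comparison, the paper avoids attacking the sign of $F=g'\Delta-g\Delta'$ pointwise. Writing $g$ via $\arccot$, it introduces $h=g'\Delta/\Delta'-g$ on the two subintervals $(0,\pi/2)$ and $(\pi/2,\pi)$, computes $h'$ in closed form as a manifestly negative rational-trigonometric expression, deduces monotonicity information for $F$ on each subinterval, and then pins down the sign of $F$ from the two endpoint limits $\lim_{\gamma\to 0^+}F<0$ and $\lim_{\gamma\to\pi^-}F=0$. The transcendental difficulty you correctly identify (the near-cancellation as $\gamma\to\pi^-$) is absorbed there by the boundary limit $F(\pi^-)=0$ together with monotonicity, rather than by a pointwise inequality. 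If you want to salvage your route, you would need to do something analogous on the simplex — e.g.\ show $H>0$ on the boundary behaviour plus a monotonicity statement along a foliation — but as written the key inequality is asserted, not established.
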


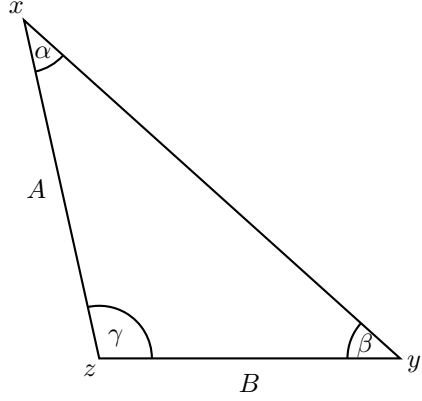
\begin{figure}
    \centering
    \begin{tikzpicture}[thick]
\coordinate (O) at (0,0);
\coordinate (A) at (4,0);
\coordinate (B) at (-1, 4.5);
\draw (O)--(A)--(B)--cycle;

\tkzLabelSegment[below=2pt](O,A){$B$}
\tkzLabelSegment[left=2pt](O,B){$A$}
\tkzLabelSegment[above right= 2pt ](A,B){}

\tkzMarkAngle[size=0.7, mark = none](A,O,B)
\tkzLabelAngle[pos = 0.35](A,O,B){$\gamma$}
\tkzLabelAngle[pos = -0.2](A,O,B){$z$}

\tkzMarkAngle[size=0.7, mark = none](B,A,O)
\tkzLabelAngle[pos = 0.5](B,A,O){$\beta$}
\tkzLabelAngle[pos = -0.2](B,A,O){$y$}

\tkzMarkAngle[size=0.7, mark = none](O,B,A)
\tkzLabelAngle[pos = 0.5](O,B,A){$\alpha$}
\tkzLabelAngle[pos = -0.2](O,B,A){$x$}

\end{tikzpicture}

    \caption{Notation in Lemma 3.}
    \label{fig:lm3}
\end{figure}
% ez a figure helyett (vagy mellett) szerintem lehet, hogy segítené az esetünket, ha leírnánk hogy az $f$ a sűrűsége köröknek a $T$ háromszögben --Máté
% Lehet. Mindenesetre a bírálóknak nem volt vele baja, később formálisan is elnevezzük, és igazából elég egyértelmű szerintem. Talán hagyjuk.

\begin{proof}
Without loss of generality, assume that $A \leq B$, and let $g = \alpha A^2 + \beta B^2$. Then, by an elementary computation, we have that
\[
g = A^2 \arccot \frac{A-B \cos \gamma}{B \sin \gamma} + B^2 \arccot \frac{B-A\cos \gamma}{A \sin \gamma},  \hbox{ and }  \Delta = \frac{1}{2} AB \sin \gamma.
\]
We regard $g$ and $\Delta$ as functions of $\gamma$.
We intend to show that $g' \Delta - g \Delta'$ is negative on the interval $(0,\pi)$ for all $A,B > 0$.
Let $h= g'\cdot \Delta/{\Delta'} - g$, and note that this expression is continuous on $\left(0, {\pi}/{2} \right)$ and $\left({\pi}/{2}, \pi \right)$ for all $A,B > 0$. By differentiating and simplifying, we obtain
\[
h'=\frac{-2\left( A^2 (1+\cos^2(\gamma)) + B^2 (1+\cos^2(\gamma)) - 4AB\cos(\gamma) \right) A^2 B^2 \sin^2(\gamma)}{\cos^2 \left( \gamma)(A^2+B^2-2AB \cos(\gamma) \right)^2},
\]
which is negative on its domain. This implies that $g' \Delta - g \Delta'$ is strictly decreasing on $\left(0, {\pi}/{2} \right)$ and strictly increasing on $\left( {\pi}/{2}, \pi \right)$. On the other hand, we have $\lim_{\gamma \to 0^+} \left( g' \Delta - g \Delta' \right) = -A^3 B \pi$, and $\lim_{\gamma \to \pi^-} \left( g' \Delta - g \Delta' \right) = 0$. This yields the assertion.
\end{proof}

\begin{lemma}\label{shell}
Consider two disks $B_i, B_j \in \F$ such that $|x_i-x_j| < \rho_i + \rho_j$, and let $v$ be a vertex of the digon $B_i \cap B_j$. Let $T = [x_i,x_j,v]$, $\Delta = \area(T)$, and let $\alpha_i = \angle vx_ix_j$ and $\alpha_j = \angle v x_j x_i$. Then
\begin{equation}\label{eq:shell}
\frac{1}{2}\alpha_i \rho_i^2+\frac{1}{2}\alpha_j \rho_j^2 \leq \frac{4 \arccos \frac{1+\mu}{2}}{(1+\mu)\sqrt{(1-\mu)(3+\mu)}} \Delta,
\end{equation}
with equality if and only if $\rho_i=\rho_j$ and $|x_i-x_j| = \rho_i(1+\mu)$.
\end{lemma}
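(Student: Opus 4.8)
The plan is to reduce this scale-invariant statement to a one-variable inequality, by first fixing the radii (which, up to scaling, leaves only the distance $|x_i-x_j|$ free) and then optimising over the ratio of the radii.

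By symmetry of the statement in $i$ and $j$ we may assume $\rho_i\ge\rho_j$, and by rescaling that $\rho_i=1$; write $t=\rho_j\in(0,1]$ and $d=|x_i-x_j|$. Since $v$ is a vertex of a genuine digon, $T$ is non-degenerate and $d<1+t$, while the $\mu$-arrangement condition — exactly as in the proof of Proposition~\ref{genmink} — gives $d\ge\max\{1+\mu t,\ t+\mu\}=1+\mu t$, the last equality holding because $t\le1$.

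First I fix $t$ and let the configuration, i.e.\ $d$, vary. Writing $\gamma=\angle x_iv x_j$, the law of cosines gives $\cos\gamma=\frac{1+t^2-d^2}{2t}$, so $\gamma$ is strictly increasing in $d$. Dividing the left-hand side of (\ref{eq:shell}) by $\Delta$ and applying Lemma~\ref{shell_prelim} with $z=v$, $x=x_i$, $y=x_j$, $A=1$, $B=t$ (so $\alpha=\alpha_i$, $\beta=\alpha_j$), we obtain $\tfrac12 f_{1,t}(\gamma)$, which by that lemma is strictly decreasing in $\gamma$, hence strictly decreasing in $d$. Consequently (\ref{eq:shell}) holds for every admissible $d$ as soon as it holds for the minimal value $d=1+\mu t$, and is strict for every larger $d$.

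It thus suffices to prove (\ref{eq:shell}) when $d=1+\mu t$; in this case $T$ is determined up to congruence by $t$. By the law of cosines $\cos\alpha_i=\frac{1+d^2-t^2}{2d}$ and $\cos\alpha_j=\frac{t^2+d^2-1}{2td}$, and by Heron's formula $\Delta=\frac t4\sqrt{(1-\mu^2)\bigl(4+4\mu t-(1-\mu^2)t^2\bigr)}$; at $t=1$ these reduce to $\cos\alpha_i=\cos\alpha_j=\frac{1+\mu}{2}$ and $\Delta=\frac{1+\mu}{4}\sqrt{(1-\mu)(3+\mu)}$, so that $\tfrac12 f_{1,1}(\gamma)$ is exactly the constant on the right-hand side of (\ref{eq:shell}). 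Hence, writing $F(t)=\dfrac{\tfrac12\alpha_i(t)+\tfrac12 t^2\alpha_j(t)}{\Delta(t)}$ with $\alpha_i,\alpha_j,\Delta$ the explicit functions of $t$ just named, the lemma is reduced to the inequality $F(t)\le F(1)$ on $(0,1]$, with equality only at $t=1$.

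This single-variable inequality is the heart of the matter. I would prove it by differentiation: writing $F=G/\Delta$ with $G(t)=\tfrac12\alpha_i(t)+\tfrac12 t^2\alpha_j(t)$, it suffices to show $G'\Delta-G\Delta'>0$ on $(0,1)$ and that $F$ extends continuously to the endpoints; after substituting the elementary closed forms for $\alpha_i,\alpha_j,\Delta$ and simplifying — as in the computations of Lemma~\ref{shell_prelim}, most safely done with a computer algebra system — the derivative reduces to an expression whose sign on $(0,1)$ is manifest. This is where the hypothesis $0<\mu\le\sqrt3-1$ (together with $t\le1$) is used, to keep the relevant radicands positive and the sign of the derivative under control. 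Finally, combining the two reductions with the strict monotonicities used in them, equality in (\ref{eq:shell}) forces $d=1+\mu t$ and $t=1$, that is $\rho_i=\rho_j$ and $|x_i-x_j|=\rho_i(1+\mu)$, and conversely this configuration attains equality by the computation of $F(1)$. The main obstacle is precisely this explicit optimisation of $F$; the rest of the argument is a soft reduction.
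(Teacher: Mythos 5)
Your reduction is exactly the one the paper uses: normalize $\rho_i=1$, $\rho_j=t\le 1$, invoke Lemma~\ref{shell_prelim} (with $z=v$, $A=1$, $B=t$, and $\gamma$ increasing in $d$ by the Law of Cosines) to push $d$ down to its minimal admissible value $1+\mu t$, and then study the resulting one-variable ratio $F(t)$ with $F(1)$ equal to the right-hand constant. Up to that point the argument is correct and matches the paper, including the observation that $\max\{1+\mu t,\,t+\mu\}=1+\mu t$ for $t\le1$ and the equality bookkeeping.

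The gap is in the final step, which you describe as ``the derivative reduces to an expression whose sign on $(0,1)$ is manifest.'' It is not, and this is precisely where all the work in the lemma lies. The paper is forced to split into two regimes: for $1/5<\rho\le1$ it proves $f'_\rho g-g'_\rho f>0$ not by symbolic simplification but by bounding the first and second partial derivatives of $f$ and $g$, deriving a Lipschitz estimate for $h=f'_\rho g-g'_\rho f$, and evaluating $h$ on an $8691\times8691$ grid — the observed minimum of $h$ on that region is about $0.00146$, so the quantity whose positivity you need is extremely close to zero and no ``manifest'' sign emerges after simplification. For $0<\rho\le1/5$ the paper does not even attempt the monotonicity route (the Lipschitz constants degenerate as $\rho\to0$, since $g$ vanishes linearly there); instead it replaces the ratio by the cruder geometric bound $1+2(1-\mu)\rho/(1+\mu\rho)$, obtained by covering the two circular sectors by a rectangle over the common chord, and compares that bound directly with the right-hand constant. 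So your claim that $F$ is increasing on all of $(0,1)$ is unverified even by the paper, and the step you defer to a computer algebra system is the actual content of the lemma rather than a routine simplification. To close the gap you would need either the paper's two-case argument (geometric bound for small $t$, rigorous interval/grid computation for $t$ bounded away from $0$) or some genuinely new monotonicity argument; as written, the heart of the proof is missing.
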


\begin{figure}
    \centering
    \begin{tikzpicture}[thick]
\tkzDefPoint(0,0){A} 
\tkzDefPoint(3.7,0){B} 
\tkzInterCC[R](A, 2.0 cm)(B,2.9 cm) \tkzGetPoints{M1}{N1}

\tkzDrawCircle[R](A,2.cm) 
\tkzDrawCircle[R](B,2.9cm)

\coordinate (O) at (0,0);
\coordinate (A) at (3.7,0);

\draw (O)--(A)--(M1)--cycle;

\tkzLabelAngle[pos = -0.35](O,M1,A){$v$}

\tkzMarkAngle[size=0.7, mark = none](A,O,M1)
\tkzLabelAngle[pos = 0.4](A,O,M1){$\alpha_i$}
\tkzLabelAngle[pos = -0.35](A,O,M1){$x_i$}

\tkzMarkAngle[size=0.7, mark = none](M1,A,O)
\tkzLabelAngle[pos = 0.5](M1,A,O){$\alpha_j$}
\tkzLabelAngle[pos = -0.35](M1,A,O){$x_j$}

\tkzLabelSegment[above=2pt](O,M1){$\rho_i$}
\tkzLabelSegment[above=2pt](A,M1){$\rho_j$}

\tkzDrawPoints[color=black, fill=black](M1, O, A)

\end{tikzpicture}
    \caption{Notation in Lemma 4.}
    \label{fig:my_label}
    
\end{figure}
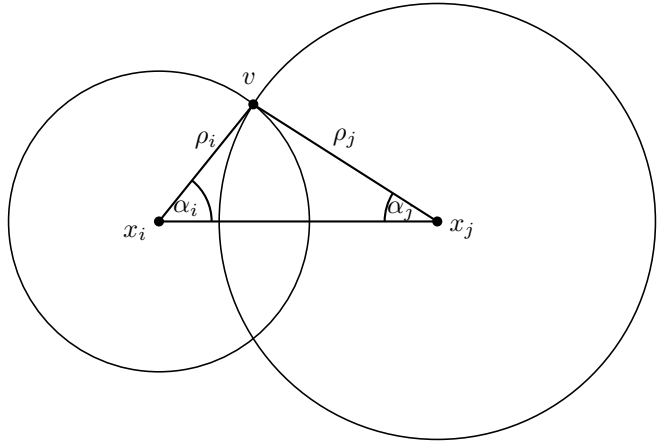

\begin{proof}
First, an elementary computation shows that if $\rho_i=\rho_j$ and $|x_i-x_j| = \rho_i(1+\mu)$, then there is equality in (\ref{eq:shell}).

Without loss of generality, let $\rho_i = 1$, and $0 < \rho_j = \rho \leq 1$. By Lemma~\ref{shell_prelim}, we may assume that $|x_i-x_j| = 1+\mu \rho$. Thus, the side lengths of $T$ are $1, \rho, 1+ \mu \rho$. Applying the Law of Cosines and Heron's formula to $T$ we obtain that
\[
\frac{\frac{1}{2}\alpha_i \rho_i^2+\frac{1}{2}\alpha_j \rho_j^2}{\Delta} = \frac{f(\rho,\mu)}{g(\rho,\mu)},
\]
where
\[
f(\rho,\mu) = \frac{1}{2} \arccos \frac{1+(1+\mu \rho)^2-r^2}{2(1+\mu \rho)}+ \frac{1}{2} \rho^2 \arccos \frac{\rho^2+(1+\mu \rho)^2-1}{2\rho(1+\mu \rho)},
\]
and
\[
g(\rho,\mu) = \rho \sqrt{2+\rho + \mu \rho)(2-\rho + \mu \rho)(1 - \mu^2)}.
\]
In the remaining part we show that
\[
\frac{f(\rho,\mu)}{g(\rho,\mu)} < \frac{4 \arccos \frac{1+\mu}{2}}{(1+\mu)\sqrt{(1-\mu)(3+\mu)}}
\]
if $0 < \rho < 1$ and $0 \leq \mu \leq \sqrt{3} -1$. To do it we distinguish two separate cases.

\emph{Case 1}, $0 < \rho \leq 1/5$. In this case we estimate $f(\rho,\mu)/g(\rho,\mu)$ as follows. Let the part of $[x_i,x_j]$ covered by both disks $B_i$ and $B_j$ be denoted by $S$. Then $S$ is a segment of length $(1-\mu) \rho$. On the other hand, if $A_i$ denotes the convex circular sector of $B_i$ bounded by the radii $[x_i,v]$ and $[x_i,x_j] \cap B_i$, and we define $A_j$ analogously, then the sets $A_i \cap A_j$ and $(A_i \cup A_j) \setminus T$ are covered by the rectangle with $S$ as a side which contains $v$ on the side parallel to $S$. The area of this rectangle is twice the area of the triangle $\conv (S \cup \{v \})$, implying that
\[
\frac{f(\rho,\mu)}{g(\rho,\mu)} \leq 1 + \frac{2(1-\mu)\rho}{1+\mu \rho}.
\]
We show that if $0 < \rho \leq 1/5$, then the right-hand side quantity in this inequality is strictly less than the right-hand side quantity in (\ref{eq:shell}).
By differentiating with respect to $\rho$, we see that as a function of $\rho$, $1 + \left(2(1-\mu)\rho \right)/(1+\mu \rho)$ is strictly increasing on its domain and attains its maximum at $\rho=1/5$. Thus, using the fact that this maximum is equal to $(7-\mu)/(5+\mu)$, we need to show that
\[
\frac{4 \arccos \frac{1+\mu}{2}}{(1+\mu)\sqrt{(1-\mu)(3+\mu)}} - \frac{7-\mu}{5+\mu} > 0.
\]
Clearly, the function
\[
\mu \mapsto \frac{ \arccos \frac{1+\mu}{2}}{ \frac{1+\mu}{2}}
\]
is strictly decreasing on the interval $[0,\sqrt{3}-1]$. By differentiation one can easily check that the function
\[
\mu \mapsto \frac{7-\mu}{5+\mu} \sqrt{(1-\mu)(3+\mu)}
\]
is also strictly increasing on the same interval. Thus, we obtain that the above expression is minimal if $\mu = \sqrt{3}-1$, implying that it is at least $0.11570\ldots$.

\emph{Case 2}, $1/5 < \rho \leq 1$. 

We show that in this case the partial derivative $\partial_{\rho} \left( f(\rho,\mu)/g(\rho,\mu) \right)$, or equivalently, the quantity $h(\rho,\mu) = f'_{\rho}(\rho,\mu) g(\rho,\mu)-g'_{\rho}(\rho,\mu) f(\rho,\mu)$ is strictly positive. By plotting the latter quantity on the rectangle $0 \leq \mu \leq \sqrt{3} - 1$, $1/5 \leq \rho \leq 1$, its minimum seems to be approximately $0.00146046085$. To use this fact, we upper bound the two partial derivatives of this function, and compute its values on a grid.
In particular, using the monotonicity properties of the functions $f,g$, we obtain that under our conditions $| f(\rho,\mu)| < 1.25$ and $|g(\rho,\mu)| \leq 0.5$. Furthermore, using the inequalities $0 \leq \mu \leq \sqrt{3} - 1$, $1/5 \leq \rho \leq 1$ and also the triangle inequality to estimate the derivatives of $f$ and $g$, we obtain that
\[
|f'_{\rho}(\rho,\mu)| < 1.95, \, |f'_{\mu}(\rho,\mu)| < 2.8, \, |f''_{\rho \rho}(\rho,\mu)| < 2.95, \, |f''_{\rho \mu}(\rho,\mu)| < 9.8,
\]
and
\[
|g'_{\rho}(\rho,\mu)| < 0.93, \, |g'_{\mu}(\rho,\mu)| < 1.08, \, |g''_{\rho \rho}(\rho,\mu)| < 2.64, \, |g''_{\rho \mu}(\rho,\mu)| < 15.1.
\]
These inequalities imply that $|h'_{\rho}(\rho,\mu)| < 4.78$ and $|h'_{\mu}(\rho,\mu)| < 28.49$, and hence,
for any $\Delta_{\rho}$ and $\Delta_{\mu}$, we have $h(\rho+\Delta_{\rho}, \mu + \Delta_{\mu}) > h(\rho,\mu) - 4.78 |\Delta_{\rho}| - 28.49 |\Delta_{\mu}|$.
Thus, we divided the rectangle $[0.2,1] \times [0,\sqrt{3}-1]$ into a $8691 \times 8691$ grid, and by numerically computing the value of $h(\rho,\mu)$ at the gridpoints, we showed that at any such point the value of $h$ (up to $12$ digits) is at least $0.00144$. According to our estimates above, this implies that $h(\rho,\mu) \geq 0.00002$ for all values of $\rho$ and $\mu$.
\end{proof}

Before our next lemma, recall that $\BB^2$ denotes the \emph{closed} unit disk centered at the origin.

\begin{lemma}\label{nocore}
For some $0 < \nu < 1$, let $x,y,z \in \Re^2$ be non-collinear points, and let $\{ B_u = u + \rho_u \BB^2 : u \in \{ x,y,z \} \}$ be a $\nu$-arrangement of disks; that is, assume that for any $\{ u,v \} \subset \{ x,y,z\}$, we have $|u-v| \geq \max \{ \rho_u,\rho_v \} + \nu \min \{ \rho_u,\rho_v\}$. Assume that for any $\{ u,v \} \subset \{ x,y,z \}$, $B_u \cap B_v \neq \emptyset$, and that the union of the three disks covers the triangle $[x,y,z]$. Then $\nu \leq \sqrt{3}-1$.
\end{lemma}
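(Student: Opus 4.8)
The plan is to argue by contradiction. Assume $\nu > \sqrt 3 - 1$; I want to contradict the covering hypothesis. The right point to look at is not the circumcenter of $[x,y,z]$ but the minimizer $p$ of the function $g(w) = \max_{u \in \{x,y,z\}} \left( |w - u| - \rho_u \right)$, which is convex and tends to $+\infty$ as $|w| \to \infty$, hence attains its minimum; set $\tau = -g(p)$, so that $|p - u| \le \rho_u - \tau$ for every $u$, with equality whenever $u$ attains the maximum defining $g(p)$. Geometrically, if $\tau > 0$ then $p$ lies in no $B_u$, and in fact the open set of points lying in no $B_u$ is a nonempty neighbourhood of $p$ (a ``curvilinear triangle''), while if $\tau \le 0$ then $p$ is a common point of all three disks.

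The first task is to locate $p$. Since $0$ lies in the subdifferential of $g$ at $p$, i.e. in the convex hull of the unit vectors $(p-u)/|p-u|$ over the indices $u$ attaining the maximum — each such $u$ is distinct from $p$, for $p$ cannot be a vertex: at a vertex $v$ one has $g(v) \ge \max_{w \ne v}(|v-w| - \rho_w) \ge 0$ by the $\nu$-arrangement property, and $g(v) > 0$ contradicts the covering hypothesis while $g(v) = 0$ forces $|v-w| = \rho_w$ for some $w$ with $\rho_w \ge \rho_v$, hence $\nu \le 0$ — it follows that $p$ is a convex combination of $x, y, z$; thus $p \in [x,y,z]$, and it lies in the relative interior of $[x,y,z]$ or of one of its edges. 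I would then check $\tau \ge 0$: if $p$ is interior, all three indices attain the maximum, so $g(p) = |p-u| - \rho_u$ for each $u$, and $g(p) \le 0$ because some $B_u$ covers $p$; if $p$ lies in the relative interior of an edge, say $[y,z]$, then $|p-y| - \rho_y = |p-z| - \rho_z = -\tau$, and since the disks meet, $|p-y| + |p-z| = |y-z| \le \rho_y + \rho_z$, giving $\tau \ge 0$ again. Hence $0 \le \tau < \min_u \rho_u$ and $|p - u| \le \rho_u - \tau$ for all $u$.

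Next comes the computation carrying the constant $\sqrt 3 - 1$. Fix a pair $\{u,v\}$ and assume $\rho_u \ge \rho_v$; the $\nu$-arrangement gives $|u-v| \ge \rho_u + \nu \rho_v$. Since $s \mapsto (\rho_u - s)^2 + (\rho_v - s)^2 + (\rho_u - s)(\rho_v - s)$ is decreasing on $[0,\rho_v)$ and $|p-u| \le \rho_u - \tau$, $|p-v| \le \rho_v - \tau$, we get $|p-u|^2 + |p-v|^2 + |p-u|\,|p-v| \le \rho_u^2 + \rho_v^2 + \rho_u \rho_v$. On the other hand $(\rho_u + \nu\rho_v)^2 - (\rho_u^2 + \rho_v^2 + \rho_u\rho_v) = \rho_v\big( \rho_u(2\nu-1) - \rho_v(1-\nu^2) \big) > 0$, because $\nu > \sqrt3 - 1$ makes $\nu^2 + 2\nu - 2 > 0$, hence $2\nu - 1 > 1 - \nu^2 \ge 0$, and $\rho_u \ge \rho_v > 0$. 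Combining, $|u-v|^2 > |p-u|^2 + |p-v|^2 + |p-u|\,|p-v|$, so the law of cosines in the non-degenerate triangle $[p,u,v]$ gives $\angle u p v > 2\pi/3$. Now sum over the three pairs: if $p$ is interior to $[x,y,z]$ the three angles at $p$ total $2\pi$, giving $2\pi > 2\pi$; if $p$ lies in the relative interior of an edge $[y,z]$, the angles $\angle x p y$ and $\angle x p z$ total $\pi$, giving $\pi > 4\pi/3$. Either way we reach a contradiction, so $\nu \le \sqrt3 - 1$.

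I expect the main obstacle to be the second step rather than the third: the final trigonometric inequality is short once one knows $|p-u| \le \rho_u - \tau$ with $\tau \ge 0$ and $p$ not a vertex, but establishing exactly these facts is where the hypotheses must be used with care. The inequality $\tau \ge 0$ is in fact false for general pairwise-intersecting triples of disks — it is precisely here that ``the union covers $[x,y,z]$'' enters — and the argument must branch according to whether $p$ lies in the interior of the triangle or on an edge, with the degenerate positions of $p$ (a vertex, or a tangency point of two disks) dispatched separately.
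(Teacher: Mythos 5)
Your route is genuinely different from the paper's and, apart from one flawed justification, it works. The paper first invokes the Knaster--Kuratowski--Mazurkiewicz lemma to produce a point $q\in[x,y,z]$ common to all three disks, and then runs a two-stage deformation argument (shrinking the distances $|u-v|$ to the minimal admissible values $\max\{\rho_u,\rho_v\}+\nu\min\{\rho_u,\rho_v\}$, then equalizing the radii) to reduce everything to three congruent disks on a regular triangle, where $\nu\le\sqrt3-1$ falls out of an elementary computation. You instead obtain the common point by minimizing the convex function $g$, and you replace the entire deformation by the law-of-cosines estimate $\angle upv>2\pi/3$ for each pair, contradicting the angle sum at $p$. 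Your pairwise inequality is correct: with $\rho_u\ge\rho_v$ one has $(\rho_u+\nu\rho_v)^2-(\rho_u^2+\rho_v^2+\rho_u\rho_v)=\rho_v\bigl(\rho_u(2\nu-1)-\rho_v(1-\nu^2)\bigr)\ge\rho_v^2(\nu^2+2\nu-2)>0$ precisely when $\nu>\sqrt3-1$, and the constant matches the extremal regular-triangle configuration. This is shorter than the paper's proof and avoids both KKM and the delicate rotation argument.

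The step that does not survive scrutiny is your exclusion of the case where the minimizer $p$ is a vertex. You assert that $g(v)>0$ at a vertex ``contradicts the covering hypothesis''; it does not. The function $g$ detects membership in the \emph{intersection} $\bigcap_u B_u$, not in the union, so $g(v)>0$ only says that $v$ lies outside one of the other two disks, which is forced by the $\nu$-arrangement condition and is entirely consistent with covering. What you would need at that point is exactly the KKM-type implication (covering of the triangle and of its edges implies a nonempty triple intersection), which you have not established. Fortunately the claim is true and provable inside your own framework: at a vertex $v$ the function $g_v$ is never active, since $g_v(v)=-\rho_v<0<|v-w|-\rho_w=g_w(v)$ for $w\ne v$; hence if $p=v$, the subdifferential of $g$ at $p$ is contained in the convex hull of at most the two unit vectors $(v-w)/|v-w|$ with $w\ne v$, and $0$ can lie in that hull only if $v$ lies on the segment joining the other two centers, contradicting non-collinearity. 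With that substitution the argument is complete. (Separately, your parenthetical gloss on $\tau$ has the signs reversed --- $\tau>0$ means $p$ lies in \emph{all} three disks, not in none of them --- but this does not affect the inequalities you actually use.)
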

\begin{figure}
    \centering
    \begin{tikzpicture}[thick]
\tkzDefPoint(0,0){A} 
\tkzDefPoint(3.9,0){B}
\tkzDefPoint(1.3,-2.3){C}
\tkzInterCC[R](A, 2.0 cm)(B,2.9 cm) \tkzGetPoints{M1}{N1}

\tkzDrawCircle[R](A,2.0cm) 
\tkzDrawCircle[R, dotted](A, 1.0cm ) %\mu = 1/2
\tkzDrawCircle[R](B,2.9cm)
\tkzDrawCircle[R, dotted](B,1.45cm)
\tkzDrawCircle[R](C, 1cm)
\tkzDrawCircle[R, dotted](C, 0.5cm)

\coordinate (A) at (0,0);
\coordinate (B) at (3.9,0);
\coordinate (C) at (1.3, -2.3);

\draw (A)--(B)--(C)--cycle;

\tkzLabelAngle[pos = -0.35](C,A,B){$x$}
\tkzLabelAngle[pos = -0.35](A,B,C){$y$}
\tkzLabelAngle[pos = -0.35](A,C,B){$z$}

\tkzLabelSegment[above=2pt](B,C){$\rho_j$}

\tkzDrawPoints[color=black, fill=black](B,C, A)

%Sugarak
\tkzDefPoint(-1.41421356237, 1.41421356237){X}
\draw[line width= 0.5] (A)--(X);
\tkzLabelSegment[above=2pt](A,X){$\rho_x$}

\tkzDefPoint(3.9+(2.9)*1.41421356237/2, (2.9/2)*1.41421356237){Y}
\draw[line width= 0.5] (B)--(Y);
\tkzLabelSegment[above=2pt](B,Y){$\rho_y$}

\tkzDefPoint(1.3 - 1/2.2360679775, -2.3 -2/2.2360679775){Z}
\draw[line width= 0.5] (C)--(Z);
\tkzLabelSegment[above=2pt](C,Z){$\rho_z$}

\node at (1.5,-1) {$T$};
\end{tikzpicture}
    \caption{Notation in Lemma 5. The circles drawn with dotted lines represent the $\mu$-cores of the disks.}
    \label{fig:my_label2}
    
\end{figure}
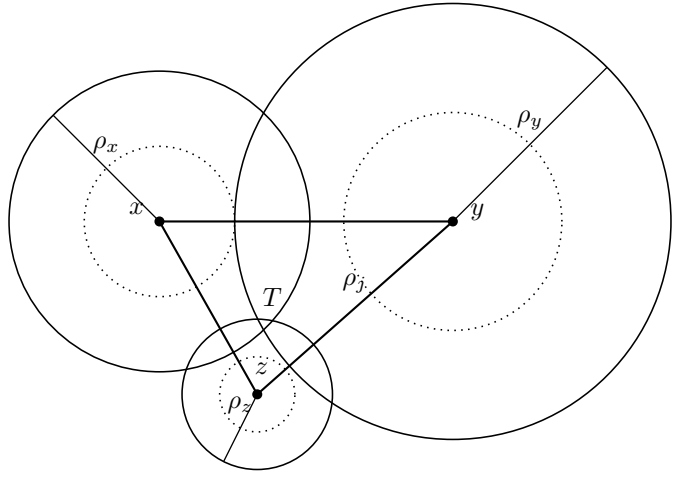
%It is worth remarking that the lemma does not hold without the condition that any two of the circles centered at $x,y,z$ intersect. Indeed, let $0 < \mu < 1$ arbitrary, and assume, for the moment, that $x,y,z$ are collinear points, with $y$ in the middle. Put two small circles centered at $x$ and $z$ and a large circle centered at $y$ such that the resulting triple is $\mu$-Minkowski. Then the segment $xz$ is contained in the interior of the union of the $3$ circles, and thus, moving $y$ slightly so that the $3$ points become non-collinear we obtain a $\mu$-Minkowski triple of circles so that their union covers the convex hull of their vertices.

\begin{proof}
Without loss of generality, assume that $0 < \rho_z \leq \rho_y \leq \rho_x$.
Since the disks are compact sets, by the Knaster-Kuratowski-Mazurkiewicz lemma \cite{KKM}, there is a point $q$ of $T$ belonging to all the disks, or in other words, there is some point $q \in T$ such that $|q-u| \leq \rho_u$ for any $u \in \{ x,y,z\}$. Recalling the notation $T=[x,y,z]$ from the introduction,
let $T' = [x',y',z' ]$ be a triangle with edge lengths $|y'-x'| = \rho_x + { \nu} \rho_y$, $|z'-x'| = \rho_x + { \nu} \rho_z$ and $|z'-y'|=\rho_y + { \nu} \rho_z$, and note that these lengths satisfy the triangle inequality.

We show that the disks $x'+\rho_x \BB^2$, $y'+\rho_y \BB^2$ and $z'+\rho_z \BB^2$ and $T'$ satisfy the conditions in the lemma. To do this, we show the following, more general statement, which, together with the trivial observation that any edge of $T'$ is covered by the two disks centered at its endpoints, clearly implies what we want: For any triangles $T= [x,y,z]$ and $ T'=[x',y',z']$ satisfying $|u'-v'| \leq |u-v|$ for any $u,v \in \{ x,y,z\}$, and for any point $q \in T$ there is a point $q' \in T'$ such that $|q'-u'| \leq |q-u|$ for any $u \in \{ x,y,z\}$. The main tool in the proof of this statement is the following straightforward consequence of the Law of Cosines, stating that if the side lengths of a triangle are $A,B,C$, and the angle of the triangle opposite of the side of length $C$ is $\gamma$, then for any fixed values of $A$ and $B$, $C$ is a strictly increasing function of $\gamma$ on the interval $(0,\pi)$.

To apply it, observe that if we fix $x,y$ and $q$, and rotate $[x,z]$ around $x$ towards $[x,q]$, we strictly decrease $|z-y|$ and $|z-q|$ and do not change $|y-x|, |z-x|, |x-q|$ and $|y-q|$. Thus, we may replace $z$ by a point $z^*$ satisfying $|z^*-y|=|z'-y'|$, or the property that $z^*, q,x$ are collinear. Repeating this transformation by $x$ or $y$ playing the role of $z$ we obtain either a triangle congruent to $T'$ in which $q$ satisfies the required conditions, or a triangle in which $q$ is a boundary point. In other words, without loss of generality we may assume that $q \in \bd (T)$. If $q \in \{ x,y,z\}$, then the statement is trivial, and so we assume that $q$ is a relative interior point of, say, $[x,y]$. In this case, if $|z-x| > |z'-x'|$ or $|z-y| > |z'-y'|$, then we may rotate $[y,z]$ or $[x,z]$ around $y$ or $x$, respectively. Finally, if $|y-x| > |y'-x'|$, then one of the angles $\angle yxz$ or $\angle xyz$, say $\angle xyz$, is acute, and then we may rotate $[z,y]$ around $z$ towards $[z,q]$. This implies the statement.

By our argument, it is sufficient to prove Lemma~\ref{nocore} under the assumption that $|y-x| = \rho_x + {\nu} \rho_y$, $|z-x| = \rho_x + { \nu} \rho_z$ and $|z-y|=\rho_y + { \nu} \rho_z$. Consider the case that $\rho_x > \rho_y$. Let $q$ be a point of $T$ belonging to each disk, implying that $|q-u| \leq \rho_u$ for all $u \in \{ x,y,z\}$. Clearly, from our conditions it follows that $|x-q| > \rho_x - \rho_y$. Let us define a $1$-parameter family of configurations, with the parameter $t \in [0,\rho_x-\rho_y]$, by setting $x(t) = x-tw$, where $w$ is the unit vector in the direction of $x-q$, $\rho_x(t)=\rho_x-t$, and keeping $q, y,z, \rho_y, \rho_z$ fixed.
Note that in this family $q \in B_{x(t)} = x(t) + \rho_x(t) \BB^2$, which implies that $|x(t)-u| \leq \rho_x(t)+\rho_u$ for $u \in \{ y,z \}$.
Thus, for any $\{ u,v \} \subset \{ x(t),y,z\}$, there is a point of $[u,v]$ belonging to both $B_u$ and $B_v$. This, together with the property that $q$ belongs 
to all three disks and using the convexity of the disks, yields that the triangle $[x(t),y,z]$ is covered by $B_{x(t)} \cup B_y \cup B_z$.
                
Let the angle between $u-x(t)$ and $w$ be denoted by $\varphi$. Then, using the linearity of directional derivatives, we have that for $f(t)= |x(t) -u|$, $f'(t) = - \cos \varphi \geq -1$ for $u \in \{y,z\}$, implying $|x(t)-u| \geq |x-u| - t = \rho_x(t) + \nu \rho_u$ for $u \in \{ y,z \}$, and also that the configuration is a $\nu$-arrangement for all values of $t$. 
Hence, all configurations in this family, and in particular, the configuration with $t = \rho_x-\rho_y$ satisfies the conditions in the lemma. Thus, repeating again the argument in the first part of the proof, we may assume that $\rho_x=\rho_y \geq \rho_z$, $|y-x| = (1+\mu) \rho_x$ and $|z-x| = |z-y| = \rho_x + { \nu} \rho_z$. Finally, if $\rho_x = \rho_y > \rho_z$, then we may assume that $q$ lies on the symmetry axis of $T$ and satisfies $|x-q| = |y-q| > \rho_x - \rho_z$. In this case we apply a similar argument by moving $x$ and $y$ towards $q$ at unit speed and decreasing $\rho_x=\rho_y$ simultaneously till they reach $\rho_z$, and, again repeating the argument in the first part of the proof, obtain that the family $\{ \bar{u} + \rho_z \BB^2 : \bar{u} \in \{ \bar{x},\bar{y},\bar{z} \} \}$, where $\bar{T}=[\bar{x},\bar{y},\bar{z}]$ is a regular triangle of side lengths $(1+ \nu) \rho_z$, covers $\bar{T}$. Thus, the inequality $ \nu \leq \sqrt{3}-1$ follows by an elementary computation.
\end{proof}

In our next lemma, for any disk $B_i \in \F$ we denote by $\bar{B}_i$ the closure $x_i + \rho_i \BB^2$ of $B_i$.

\begin{lemma}\label{core}
Let $B_i, B_j, B_k \in \F$ such that {$\bar{B}_u \cap \bar{B}_v \not\subseteq B_w$} for any $\{u,v,w \} = \{ i,j,k\}$.
Let $T=[x_i,x_j,x_k]$, $\Delta = \area(T)$, and $\alpha_u = \angle x_v x_u x_w$. %for any $\{u,v,w \} = \{ i,j,k\}$.
If $T \subset {\bar{B}_i \cup \bar{B}_j \cup \bar{B}_k}$, then
\begin{equation}\label{eq:core}
\frac{1}{2} \sum_{u \in \{i,j,k\}} \alpha_u \rho_u^2 \leq \frac{2\pi }{\sqrt{3}(1+\mu)^2} \Delta,
\end{equation}
with equality if and only if $\rho_i=\rho_j=\rho_k$, and $T$ is a regular triangle of side length $(1+\mu)\rho_i$.
\end{lemma}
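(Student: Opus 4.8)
The plan is to argue as in the proof of Lemma~\ref{shell}: normalise, reduce to an extremal triangle, and finish with an explicit (possibly computer-assisted) estimate. Observe first that $\frac{2\pi}{\sqrt3(1+\mu)^2}=\frac{4}{(1+\mu)^2}\cdot\frac{\pi}{\sqrt{12}}$, so (\ref{eq:core}) is a ``local'' counterpart of the optimal circle-packing density $\pi/\sqrt{12}$; this indicates the extremal configuration and is a useful sanity check, although the statement for a single triangle does not follow formally from a packing bound. For the equality case I would simply verify it: if $\rho_i=\rho_j=\rho_k=\rho$ and $T$ is regular of side $(1+\mu)\rho$, then $\alpha_u=\pi/3$ for each $u$, the left-hand side of~(\ref{eq:core}) equals $\frac{\pi}{2}\rho^2$ and $\Delta=\frac{\sqrt3}{4}(1+\mu)^2\rho^2$, so equality holds; moreover such a configuration is a $\mu$-arrangement with $\bar B_u\cap\bar B_v\not\subseteq B_w$ for all permutations, and, since its circumradius $(1+\mu)\rho/\sqrt3$ is at most $\rho$ exactly when $\mu\le\sqrt3-1$, the triangle $T$ is covered by the three disks; thus the hypotheses are consistent and the bound is sharp.

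For the inequality, normalise so that $\rho_i=1\ge\rho_j=s\ge\rho_k=t>0$. The first step is a reduction to the case in which all three edges are as short as the $\mu$-arrangement permits, i.e. $|x_u-x_v|=\max\{\rho_u,\rho_v\}+\mu\min\{\rho_u,\rho_v\}$ for every pair. To shorten an edge $[x_u,x_v]$ I would keep the radii and the two segments issuing from the remaining vertex $x_w$ fixed and decrease the angle $\angle x_u x_w x_v$; by Lemma~\ref{shell_prelim} this strictly increases $(\alpha_u|x_w-x_u|^2+\alpha_v|x_w-x_v|^2)/\Delta$. What is actually needed is that $(\alpha_i\rho_i^2+\alpha_j\rho_j^2+\alpha_k\rho_k^2)/\Delta$ does not decrease, so one must also control the simultaneous decrease of the apex term $\alpha_w\rho_w^2$; choosing $w=k$ whenever possible (so that the apex carries the smallest radius) keeps this loss small, and a derivative computation of the type done in Lemma~\ref{shell_prelim}, using $\rho_k\le\rho_j\le\rho_i$ and $\mu\le\sqrt3-1$, should yield the net monotonicity. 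At the same time one must check that shrinking the edges preserves the covering hypothesis $T\subseteq\bar B_i\cup\bar B_j\cup\bar B_k$; this is precisely the reduction carried out in the proof of Lemma~\ref{nocore}, whose argument applies here essentially verbatim.

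After this reduction the configuration is governed by the two parameters $s,t$ together with $\mu$: the sides of $T$ are $1+\mu s$, $1+\mu t$, $s+\mu t$, the angles $\alpha_i,\alpha_j,\alpha_k$ follow from the Law of Cosines, and $\Delta$ from Heron's formula. It then remains to prove
\[
\frac{\alpha_i+\alpha_j s^2+\alpha_k t^2}{\Delta}\le\frac{4\pi}{\sqrt3(1+\mu)^2}
\]
for all admissible $(s,t,\mu)$ — those with $0<t\le s\le1$, $0\le\mu\le\sqrt3-1$ for which $T$ is covered — with equality only at $s=t=1$. I would treat this as in Case~2 of the proof of Lemma~\ref{shell}: bound the partial derivatives of the difference of the two sides over the admissible box, check positivity (strict away from $s=t=1$) on a sufficiently fine grid, and conclude. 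Alternatively, one can first reduce further, once more imitating the proof of Lemma~\ref{nocore} — sliding $x_i$, and then $x_i$ and $x_j$ jointly, towards a common point $q$ of the three disks while shrinking the corresponding radii down to $\rho_k$ — to the single regular-triangle case, provided each such motion is shown not to decrease the density ratio.

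The main obstacle is the monotonicity in the reduction step: unlike in Lemma~\ref{shell}, the quantity to be controlled is not literally the function of Lemma~\ref{shell_prelim}, because the weights $\rho_u^2$ are unrelated to the lengths of the apex segments, and shortening an edge decreases the sector $\alpha_w\rho_w^2$ at the apex while increasing the two others. One must show that this trade-off always goes the right way inside the admissible region, and it is here that $\rho_k\le\rho_j\le\rho_i$, $\mu\le\sqrt3-1$, and the covering condition are all used; the residual inequality is then a bounded computation of the kind already performed in Lemma~\ref{shell}. A secondary issue is the equality discussion, where one has to run the reductions backwards to confirm that equality in~(\ref{eq:core}) forces $\rho_i=\rho_j=\rho_k$ and $T$ regular.
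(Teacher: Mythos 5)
Your outline diverges from the paper's proof in its central reduction, and that reduction contains a genuine gap. The paper never moves the centers: it keeps $x_i,x_j,x_k$ fixed and shrinks all three radii by the common factor $(1+\mu)/(1+\nu)$, which leaves every $\alpha_u$ and $\Delta$ unchanged and rescales $\sum\alpha_u\rho_u^2$ by an exact factor, so no monotonicity statement is needed at all; the process terminates either when the three closed disks meet in a single point (then $T$ splits into three triangles to which Lemma~\ref{shell} applies, and one checks $\frac{4\arccos\frac{1+\nu}{2}}{(1+\nu)\sqrt{(1-\nu)(3+\nu)}}\le\frac{2\pi}{\sqrt3(1+\nu)^2}$), or when some $\bar B_u\cap\bar B_v$ becomes contained in $\bar B_w$, a case disposed of by a reflection-and-iteration argument. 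Your plan instead fixes the radii and pulls the centers together until every $|x_u-x_v|$ equals $\max\{\rho_u,\rho_v\}+\mu\min\{\rho_u,\rho_v\}$, and the required monotonicity of $\bigl(\sum_u\alpha_u\rho_u^2\bigr)/\Delta$ under this deformation is precisely what you do not prove. It does not follow from Lemma~\ref{shell_prelim}: there the weights are the squared lengths of the two segments issuing from the apex, whereas here the weights are $\rho_u^2$ and $\rho_v^2$, which are unrelated to $|x_w-x_u|$ and $|x_w-x_v|$. Nor does the soft argument (``weight flows from the smallest radius to the larger ones'') work: when you close the angle at $x_w$, it is not true that both base angles increase --- in a sufficiently lopsided triangle one of them decreases --- and for two of the three edges the apex is $x_j$ or $x_i$, not the vertex carrying the smallest radius, so the sign of the net change in the numerator is genuinely unclear. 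Since this is the step on which everything else rests, the proposal as written does not constitute a proof.

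Two further points would need attention even if the monotonicity were established. First, your terminal verification is a three-parameter problem in $(s,t,\mu)$ over a region cut out by the covering condition, which is substantially more delicate than the two-parameter grid computation of Lemma~\ref{shell}, and you would also have to prove strictness away from $s=t=1$ to recover the equality characterization (``running the reductions backwards'' needs an actual argument that each reduction is strictly increasing unless it is trivial). Second, the hypothesis $\bar B_u\cap\bar B_v\not\subseteq B_w$ plays no role in your argument, whereas the bulk of the paper's proof (its case (ii)) is devoted exactly to the configurations where a digon is swallowed by the third closed disk; if your route really does not need that hypothesis you should say so explicitly and make sure the reduced extremal configurations you enumerate include those degenerate ones. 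Your verification of the equality case itself is correct and matches the paper.
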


\begin{proof}
In the proof we call $$\delta = \frac{\sum_{u \in \{i,j,k\}} \alpha_u \rho_u^2}{2\Delta}$$ the \emph{density} of the configuration.

Consider the $1$-parameter families of disks $B_u(\nu) = x_u + \left(1+\mu\right)/\left({1+\nu}\right) \rho_u \B$, where $u \in \{i,j,k\}$ and $\nu \in [\mu,1]$. Observe that the three disks $B_u(\nu)$, where $u \in \{ i,j,k \}$, form a $\nu$-arrangement for any $\nu \geq \mu$.
Indeed, in this case for any $\{ u,v \} \subset \{ i,j,k\}$, if $\rho_u \leq \rho_v$, we have 
\[
\frac{1+\mu}{1+\nu} \rho_v + \nu  \left(\frac{1+\mu}{1+\nu} \rho_u\right) =
\rho_v + \mu \rho_u - \frac{\nu-\mu}{1+\nu}(\rho_v-\rho_u) \leq \rho_v + \mu \rho_u \leq |x_u-x_v|.
\]
Furthermore, for any $\nu \geq \mu$, we have 
\[
(1+\mu)^2 \sum_{u \in \{i,j,k\}} \alpha_u \rho_u^2 =  (1+\nu)^2 \sum_{u \in \{i,j,k\}} \alpha_u \left( \frac{1+\mu}{1+\nu} \right)^2 \rho_u^2.
\]

Thus, it is sufficient to prove the assertion for the maximal value $\bar{\nu}$ of $\nu$ such that the conditions $T \subset \bar{B}_i(\nu) \cup \bar{B}_j(\nu) \cup \bar{B}_k(\nu)$ and $\bar{B}_u \cap \bar{B}_v \not\subseteq B_w$ are satisfied for any $\{u,v,w \} = \{ i,j,k\}$.  Since the relation $\bar{B}_u \cap \bar{B}_v\not\subseteq B_w$ implies, in particular, that $\bar{B}_u \cap \bar{B}_v \neq \emptyset$, in this case the conditions of Lemma~\ref{nocore}  are satisfied, yielding $\bar{\nu} \leq \sqrt{3}-1$. 
Hence, with a little abuse of notation, we may assume that $\bar{\nu}=\mu$. Then one of the following holds:
\begin{enumerate}
\item[(i)] The intersection of the disks $\bar{B}_u$ is a single point.
\item[(ii)] For some $\{u,v,w \} = \{ i,j,k\}$, $\bar{B}_u \cap \bar{B}_v \subset \bar{B}_w$ and $\bar{B}_u \cap \bar{B}_v \not\subset B_w$.
\end{enumerate}

Before investigating (i) and (ii), we remark that during this process, which we refer to as \emph{$\mu$-increasing process}, even though there might be non-maximal values of $\nu$ for which the modified configuration satisfies the conditions of the lemma and also (i) or (ii), we always choose the \emph{maximal} value. This value is determined by the centers of the original disks and the ratios of their radii.

First, consider (i). Then, clearly, the unique intersection point $q$ of the disks lies in $T$, and note that either $q$ lies in the boundary of all three disks, or two disks touch at $q$. We describe the proof only in the first case, as in the second one we may apply a straightforward modification of our argument. Thus, in this case we may decompose $T$ into three triangles $[x_i,x_j,q]$, $[x_i,x_k,q]$ and $[x_j,x_k,q]$ satisfying the conditions in Lemma~\ref{shell}, and obtain
\[
\frac{1}{2} \sum_{u \in \{i,j,k\}} \alpha_u \rho_u^2 \leq \frac{4 \arccos \frac{1+\mu}{2}}{(1+\mu)\sqrt{(1-\mu)(3+\mu)}} \Delta \leq \frac{2\pi }{\sqrt{3}(1+\mu)^2} \Delta,
\]
where the second inequality follows from the fact that the two expressions are equal if $\mu=\sqrt{3}-1$, and
\[
\left( 2 \arccos \frac{1+\mu}{2} - \frac{\pi \sqrt{(1-\mu)(3+\mu)}}{\sqrt{3} (1+\mu)} \right)' > 0
\]
if $\mu \in [0,\sqrt{3}-1]$. Here, by Lemma~\ref{shell}, equality holds only if $\rho_i=\rho_j=\rho_k$, and $T$ is a regular triangle of side length $(1+\mu)\rho_i$. On the other hand, under these conditions in (\ref{eq:core}) we have equality. This implies Lemma~\ref{core} for (i).

In the remaining part of the proof, we show that if (ii) is satisfied, the density of the configuration is strictly less than ${2\pi}/\left(\sqrt{3}(1+\mu)^2\right)$. Let $q$ be a common point of $\bd(\bar{B}_w)$ and, say, $\bar{B}_u$. If $q$ is a relative interior point of an arc in $\bd (\bar{B}_u \cap \bar{B}_v)$, then one of the disks is contained in another one, which contradicts the fact that the disks $B_u, B_v, B_w$ form a $\mu$-arrangement.
Thus, we have that either $\bar{B}_u \cap \bar{B}_v = \{ q \}$, or that $q$ is a vertex of the digon $B_u \cap B_v$. If $\bar{B}_u \cap \bar{B}_v = \{ q \}$, then the conditions of (i) are satisfied, and thus, we assume that $q$ is a vertex of the digon $B_u \cap B_v$.
By choosing a suitable coordinate system and rescaling and relabeling, if necessary, we may assume that $B_u = \B$, $x_v$ lies on the positive half of the $x$-axis, and $x_w$ is written in the form $x_w=(\zeta_w,\eta_w)$, where $\eta_w > 0$, and the radical line of $B_u$ and $B_v$ separates $x_v$ and $x_w$ (cf. Figure~\ref{fig:uvw}).
Set $\rho=\rho_w$. We show that $\eta_w > (1+\mu)\rho/{2}$.

\begin{figure}

\begin{center}
\begin{overpic}[width=0.7\textwidth]{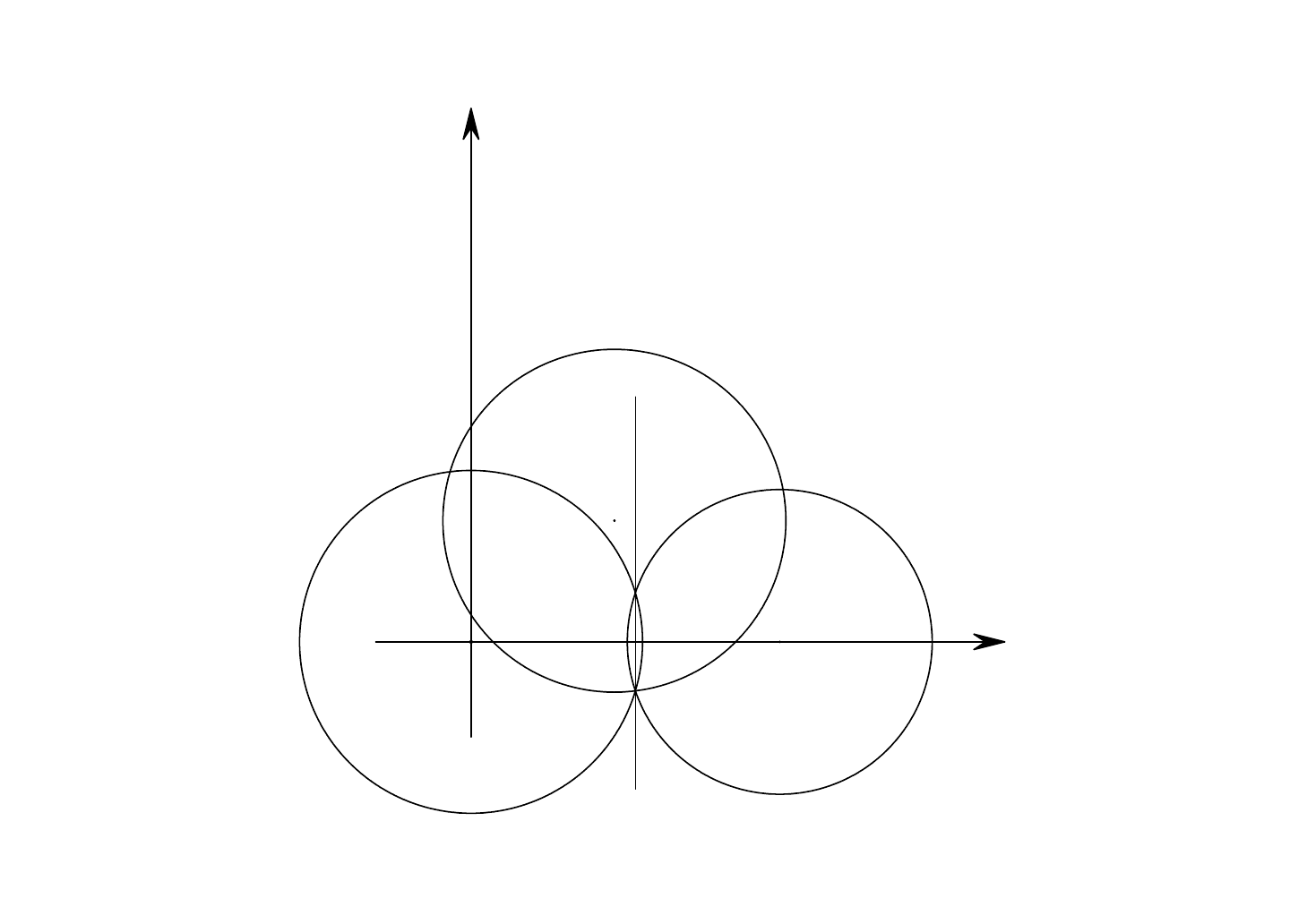}
 \put (30,23) {$\bar{B}_u$}
 \put (43, 33){$\bar{B}_w$}
 \put (60, 23){$\bar{B}_v$}
\end{overpic}
\caption{An illustration for the proof of Lemma~\ref{core}. }
\label{fig:uvw}
\end{center}
\end{figure}

\emph{Case 1}, if $\rho \geq 1$. Then we have $|x_w| \geq \rho + \mu$.

Let the radical line of $B_u$ and $B_v$ be the line $\{ x=t \}$ for some $0 < t  \leq 1$. Then, as this line separates $x_v$ and $x_w$, we have $\zeta_w \leq t$, and by (ii) we have $q=(t,-\sqrt{1-t^2})$. This implies that $|x_w-q| \leq |x_w-x_u|, |x_w-x_v|$, from which we have $0 \leq \zeta_w$. Let $S$ denote the half-infinite strip $S=\{ (\zeta,\eta) \in \Re^2 : 0 \leq \zeta \leq t, \eta \geq 0 \}$, and set $s=(t,-\sqrt{1-t^2}+\rho)$. Note that by our considerations, $x_w \in S$ and $|x_w - q| = \rho$, which yield $\eta_w \leq -\sqrt{1-t^2}+\rho$. From this it follows that $\rho + \mu \leq |x_w | \leq |s|$, or in other words, we have $t^2 + (\rho-\sqrt{1-t^2})^2 \geq (\rho + \mu)^2$. By solving this inequality for $t$ with parameters $\rho$ and $\mu$, we obtain that $t \geq t_0$, $1 \leq \rho \leq \left( 1-\mu^2\right)/\left(2\mu\right)$ and $0 \leq \mu \leq \sqrt{2}-1$, where
\[
t_0 = \sqrt{1- \left( \frac{1-2\mu \rho - \mu^2}{2 \rho} \right)^2}.
\]

Let $p=(\zeta_p,\eta_p)$ be the unique point in $S$ with $|p| = \rho+ \mu$ and $|p-q| = \rho$, and observe that $\eta_w \geq \eta_p$. Now we find the minimal value of $\eta_p$ if $t$ is permitted to change and $\rho$ is fixed. Set $p'=(\zeta_p,-\sqrt{1-\zeta_p^2})$. Since the bisector of $[p',q]$ separates $p'$ and $p$, it follows that $|p-p'| \geq |p-q|=\rho$ with equality only if $p'=q$ and $p=s$, or in other words, if $t=t_0$.
%, where we set $t_0 = \sqrt{1- \left( \frac{1-2\mu \rho - \mu^2}{2 \rho} \right)^2}$ ezt kitöröltem, mert a fenti jelölést használva felesleges --Máté
This yields that $\zeta_p$ is maximal if $t=t_0$. On the other hand, since $|p| = \rho+ \mu$ and $p$ lies in the first quadrant, $\eta_p$ is minimal if $\zeta_p$ is maximal. Thus, for a fixed value of $\rho$, $\eta_p$ is minimal if $t=t_0$ and $p = s = (t_0,-\sqrt{1-t_0^2}+\rho)$, implying that $\eta_w \geq -\sqrt{1-t_0^2}+\rho = \left( 2\rho^2+\mu^2 + 2\mu\rho-1\right) /(2\rho)$. Now, $\rho \geq 1$ and $\mu < 1$ yields that
\[
\frac{2\rho^2+\mu^2 + 2\mu\rho-1}{2\rho} - \frac{(1+\mu)\rho}{2} = \frac{\rho^2- \mu \rho^2+2\mu \rho - 1}{2\rho} \geq \frac{\mu}{2\rho} > 0,
\]
implying the statement.

\emph{Case 2}, if $0 < \rho \leq 1$. In this case the inequality $\eta_w > {(1+\mu)\rho}/{2}$ follows by a similar consideration.

In the remaining part of the proof, let 
$$\sigma(\mu) = \frac{2\pi}{\sqrt{3}(1+\mu)^2}.$$
Now we prove the lemma for (ii). Suppose for contradiction that for some configuration $\{ B_u, B_v, B_w \}$ satisfying (ii) the density is at least $\sigma(\mu)$; here we label the disks as in the previous part of the proof. Let $B'_w=x'_w + \rho_w \B$ denote the reflection of $B_w$ to the line through $[x_u,x_v]$.
By the inequality $\eta_w > {(1+\mu)\rho}/{2}$ proved in the two previous cases, we have that $\{ B_u, B_v, B_w, B_w' \}$ is a $\mu$-arrangement, where we observe that by the strict inequality, $B_w$ and $B_w'$ do not touch each others cores. Furthermore, each triangle $[x_u,x_w,x'_w]$ and $[x_v,x_w,x'_w]$ is covered by the three disks from this family centered at the vertices of the triangle, and the intersection of no two disks from one of these triples is contained in the third one. Thus, the conditions of Lemma~\ref{core} are satisfied for both $\{ B_u, B_w, B'_w\}$ and $\{ B_v, B_w, B'_w \}$. Observe that as by our assumption the density in $T$ is $\sigma(\mu)$, it follows that the density in at least one of the triangles $[x_u,x_w,x'_w]$ and $[x_v,x_w,x'_w]$, say in $T'=[x_u,x_w,x'_w]$, is at least $\sigma(\mu)$. In other words, under our condition there is an axially symmetric arrangement with density at least $\sigma(\mu)$. Now we apply the $\mu$-increasing process as in the first part of the proof and obtain a $\mu'$-arrangement $\{ \hat{B}_u = x_u + {(1+\mu)}/{(1+\mu')} \rho_u \B, \hat{B}_w = x_w + {(1+\mu)}/{(1+\mu')} \rho_w \B, \hat{B}_w' = x_w' + {(1+\mu)}/{(1+\mu')} \rho_w \B \}$ with density ${\sigma(\mu')}$ and $\mu' \geq \mu$ that satisfies either (i) or (ii). If it satisfies (i), we have that the density of this configuration is at most $\sigma(\mu')$ with equality if only if $T'$ is a regular triangle of side length $(1+\mu')\rho$, where $\rho$ is the common radius of the three disks. On the other hand, this implies that in case of equality, the disks centered at $x_w$ and $x_w'$ touch each others' cores which, by the properties of the $\mu$-increasing process, contradicts the fact that $B_w$ and $B_w'$ do not touch each others' $\mu$-cores. Thus, we have that the configuration satisfies (ii).

From Lemma~\ref{2} it follows that $\hat{B}_w \cap \hat{B}_w' \subset \hat{B}_u$. Thus, applying the previous consideration with $\hat{B}_u$ playing the role of $B_w$, we obtain that the distance of $x_u$ from the line through $[x_w,x_w']$ is greater than ${(1+\mu')}/{2} \rho_u$. Thus, defining $\hat{B}_u'=x_u'+ {(1+\mu)}/{(1+\mu')} \rho_u \B$ as the reflection of $B_u'$ about the line through $[x_w,x_w']$, we have that $\{ \hat{B}_u, \hat{B}_w, \hat{B}_w', \hat{B}_u' \}$ is a $\mu'$-arrangement such that $\{ \hat{B}_u, \hat{B}_u', \hat{B}_w \}$ and $\{ \hat{B}_u, \hat{B}_u', \hat{B}_w' \}$ satisfy the conditions of Lemma~\ref{core}. Without loss of generality, we may assume that the density of $\{ \hat{B}_u, \hat{B}_u', \hat{B}_w \}$ is at least $\sigma(\mu')$. Again applying the $\mu$-increasing procedure described in the beginning of the proof, we obtain a $\mu''$-arrangement of three disks, with $\mu'' \geq \mu'$, concentric with the original ones that satisfy the conditions of the lemma and also (i) or (ii). Like in the previous paragraph, (i) leads to a contradiction, and we have that it satisfies (ii). Now, again repeating the argument we obtain a $\mu'''$ -arrangement
\[
\left\{ y + \frac{1+\mu}{1+\mu'''} \rho_u \B, x_w + \frac{1+\mu}{1+\mu''''} \rho_w \B, x_w' + \frac{1+\mu}{1+\mu'''} \rho_w \B \right\},
\]
with density at least $\sigma(\mu''')$ and $\mu''' \geq \mu''$, that satisfies the conditions of the lemma, where either $y=x_u$ or $y = x'_u$. On the other hand, since in the $\mu$-increasing process we choose the \emph{maximal} value of the parameter satisfying the required conditions, this yields that $\mu'=\mu''=\mu'''$. But in this case the property that $\{ \hat{B}_u, \hat{B}_u', \hat{B}_w \}$ satisfies (ii) yields that $\{ \hat{B}_u, \hat{B}_u', \hat{B}_w \}$ does not; a contradiction.
\end{proof}

\section{Proof of Theorem ~\ref{thm:main}}\label{sec:proof}

The idea of the proof follows that in \cite{agg} with suitable modifications. In the proof we decompose $U(\F)=\bigcup_{i=1}^n B_i$, by associating a polygon to each vertex of certain free digons formed by two disks. Before doing it, we first prove some properties of $\mu$-arrangements.

Let $q$ be a vertex of a free digon, say, $ D= B_1 \cap B_2$.
We show that the convex angular region $R$ bounded by the closed half lines starting at $q$ and passing through $x_1$ and $x_2$, respectively, does not contain the center of any element of $\F$ different from $B_1$ and $B_2$ containing $q$ on its boundary. 
Indeed, suppose for contradiction that there is a disk $B_3=x_3+\rho_3 \B \in \F$ with $q \in {\bd (B_3) }$ and $x_3 \in R$. Since $[q,x_1,x_2] {\setminus \{ q \} }  \subset B_1 \cup B_2$, from this and the fact that $\F$ is a Minkowski-arrangement, it follows that the line through $[x_1,x_2]$ strictly separates $x_3$ from $q$. As this line is the bisector of the segment $[q,q']$, where $q'$ is the vertex of $D$ different from $q$, from this it also follows that $|x_3-q|>|x_3-q'|$. Thus, $q' \in B_3$.

 Observe that in a Minkowski arrangement any disk intersects the boundary of another one in an arc shorter than a semicircle. This implies, in particular, that $B_3 \cap \bd(B_1)$ and $B_3 \cap \bd(B_2)$ are arcs shorter than a semicircle. On the other hand, from this the fact that $q,q' \in B_3$ yields that $\bd(D) \subset B_3$, implying, by the properties of convexity, that $D \subset B_3$, which contradicts our assumption that $D$ is a free digon.
 
 Note that, in particular, we have shown that if a member of $\F$ contains both vertices of a digon, then it contains the digon.

\begin{figure}[ht]
\begin{center}
\begin{overpic}[width=0.5\textwidth]{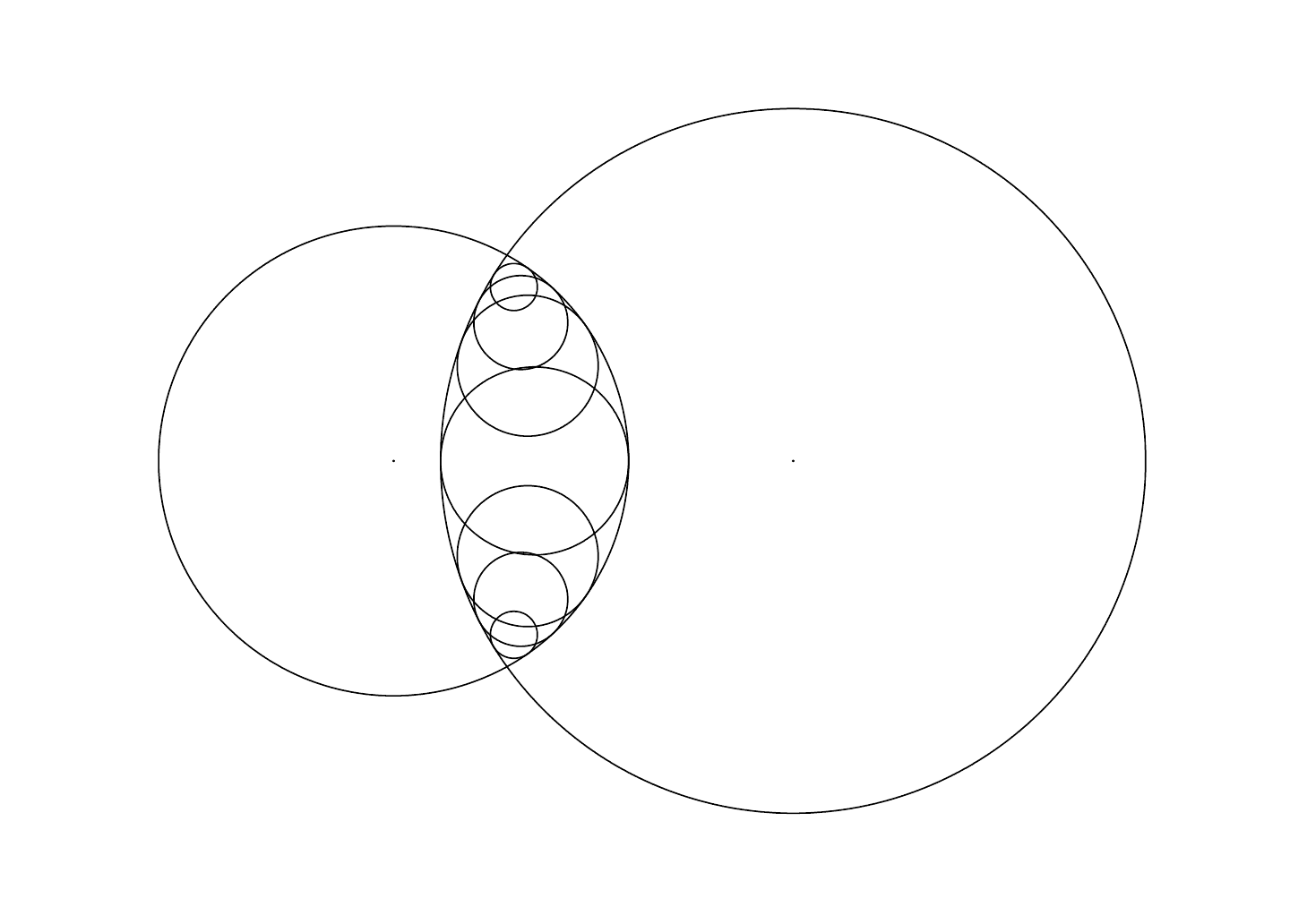}
\put(22, 28){$B_1$}
\put(65, 28){$B_2$}
\put(37, 34){$B(t)$}
\end{overpic}
\caption{The $1$-parameter family of disks inscribed in $B_1 \cap B_2$.}
\label{fig:inscribed}
\end{center}
\end{figure}

Observe that the disks inscribed in $D$ can be written as a $1$-parameter family {of disks} $B(t)$ continuous with respect to Hausdorff distance, where $t \in (0,1)$ {and $B(t)$ tends to $\{ q \}$ as $t \to 0^+$} (cf. Figure~\ref{fig:inscribed}); here the term `inscribed' means that the disk is contained in $B_i \cap B_j$ and touches both disks from inside.
We show that if some member $B_k$ of $\F$, different from $B_1$ and $B_2$, contains $B(t)$ for some value of $t$, then $B_k$ contains exactly one vertex of $D$.
% Egy kicsit pongyola volt a 'some $B(t)$s' szöveg, de láttam már ilyet cikkben. 
Indeed, assume that some $B_k$ contains some $B(t)$ but it does not contain any vertex of $D$. Then for $i \in \{1,2\}$, $B_k \cap \bd(B_i)$ is {a circular} arc $\Gamma_i$ in $\bd (D)$. Let $L_i$ be the half line starting at the midpoint of $\Gamma_i$, and pointing in the direction of the outer normal vector of $B_i$ at this point. Note that as $D$ is a plane convex body, $L_1 \cap L_2 = \emptyset$. On the other hand, since $B_1,B_2, B_k$ are a Minkowski arrangement, from this it follows that $x_k \in L_1 \cap L_2$; a contradiction. The property that no $B_k$ contains both vertices of $D$ follows from the fact that $D$ is a free digon.
Thus, if $q \in B_k$ for an element $B_k \in \F$, then there is some value $t_0 \in {(0,1)}$ such that $B(t) \subseteq B_k$ if and only if $t \in (0,t_0]$.

In the proof, we call the disks $B_i, B_j$ \emph{adjacent}, if $B_i \cap B_j$ is a digon, and there is a member of the family $B(t)$ defined in the previous paragraph
that is not contained in any element of $\F$ different from $B_i$ and $B_j$. Here, we remark that any two adjacent disks define a free digon, and if a vertex of a free digon is a boundary point of $U(\F)$, then the digon is defined by a pair of adjacent disks.

Consider a pair of adjacent disks, say $B_1$ and $B_2$, and let $q$ be a vertex of $D = B_1 \cap B_2$. 
If $q$ is a boundary point of the union $U(\F)$, then we call the triangle $[x_1,x_2,q]$ a \emph{shell triangle}, and observe that by the consideration in the previous paragraph, the union of shell triangles coincides with the inner shell of $\F$.

If $q$ is not a boundary point of $U(\F)$, then there is a maximal value $t_0\in {(0,1)}$ such that $B(t_0)=x+\rho\BB^2$ is contained in an element $B_i$ of $\F$ satisfying $q \in B_i$. Then, clearly, $B(t_0)$ touches any such $B_i$ from inside, and since $B_1$ and $B_2$ are adjacent, there is no element of $\F$ containing $B(t_0)$ and the vertex of $D$ different from $q$. Without loss of generality, assume that the elements of $\F$ touched by $B(t_0)$ from inside are $B_1, B_2, \ldots, B_k$. Since $B_1$ and $B_2$ are adjacent and there is no element of $\F$ containing both $B(t_0)$ and the vertex of $D$ different from $q$, we have that the tangent points of $B_1$ and $B_2$ on $\bd(B(t_0))$ are consecutive points among the tangent points of all the disks $B_i$, where $1 \leq i \leq k$. Thus, we may assume that the tangent points of $B_1, B_2, \ldots, B_k$ on $B(t_0)$ are in this counterclockwise order on $\bd(B(t_0))$.
Let $x$ denote the center of $B(t_0)$. Since $\F$ is a Minkowski arrangement, for any $1 \leq i < j \leq k$, the triangle $[x,x_i,x_j]$ contains the center of no element of $\mathcal{F}$ apart from $B_i$ and $B_j$, which yields that the points $x_1, x_2, \ldots, x_k$ are in convex position, and their convex hull $P_q$ contains $x$ in its interior but it does not contain the center of any element of $\F$ different from $x_1,x_2,\ldots, x_k$ (cf. also \cite{agg}). We call $P_q$ a \emph{core polygon}.

We remark that since $\F$ is a $\mu$-arrangement, the longest side of the triangle $[x,x_i,x_{i+1}]$, for $i=1,2\ldots,k$, is $[x_i,x_{i+1}]$. This implies that $\angle x_i x x_{i+1} > {\pi}/{3}$, and also that $k < 6$. Furthermore, it is easy to see that for any $i=1,2,\ldots,k$, the disks $B_i$ and $B_{i+1}$ are adjacent.
Thus, any edge of a core polygon is an edge of another core polygon or a shell triangle.
This property, combined with the observation that no core polygon or shell triangle contains any center of an element of $\F$ other than their vertices, implies that
core polygons cover the core of $\F$ without interstices and overlap (see also \cite{agg}).

Let us decompose all core polygons of $\F$ into triangles, which we call \emph{core triangles}, by drawing all diagonals in the polygon starting at a fixed vertex, and note that the conditions in Lemma~\ref{core} are satisfied for all core triangles. Now, the inequality part of Theorem~\ref{thm:main} follows from Lemmas~\ref{shell} and \ref{core}, with equality if and only if each core triangle is a regular triangle $[x_i,x_j,x_k]$ of side length $(1+\mu) \rho$, where $\rho=\rho_i=\rho_j=\rho_k$, and each shell triangle $[x_i,x_j,q]$, where $q$ is a vertex of the digon $B_i \cap B_j$ is an isosceles triangle whose base is of length $(1+\mu)\rho$, and $\rho=\rho_i=\rho_j$. Furthermore, since to decompose a core polygon into core triangles we can draw diagonals starting at any vertex of the polygon, we have that in case of equality in the inequality in Theorem~\ref{thm:main}, all sides and all diagonals of any core polygon are of equal length. From this we have that all core polygons are regular triangles, implying that all free digons in $\F$ are thick.

On the other hand, assume that all free digons in $\F$ are thick. Then, from Lemma \ref{3} it follows that any connected component of $\F$ contains congruent disks. Since an adjacent pair of disks defines a free digon, from this we have that, in a component consisting of disks of radius $\rho > 0$, the distance between the centers of two disks defining a shell triangle, and the edge-lengths of any core polygon, are equal to $(1+\mu)\rho$. Furthermore, since all disks centered at the vertices of a {core polygon are} touched by the same disk from inside, we also have that all core polygons in the component are regular $k$-gons of edge-length
$(1+\mu)\rho$, where $3 \leq k \leq 5$. This and the fact that any edge of a core polygon connects the vertices of an adjacent pair of disks yield that if the intersection of any two disks centered at two different vertices of a core polygon is more than one point, then it is a free digon. Thus, any diagonal of a core polygon in this component is of length $(1+\mu)\rho$, implying that any core polygon is a regular triangle, from which the equality in Theorem~\ref{thm:main} readily follows.

\section{Remarks and open questions}\label{sec:remarks}

\begin{rem}\label{rem:1}
If $\sqrt{3}-1 < \mu < 1$, then by Lemma~\ref{nocore}, $C(\F)=\emptyset$ for any $\mu$-arrangement $\F$ of order $\mu$.
\end{rem}

{
\begin{rem}\label{rem:2}
Observe that the proof of Theorem~\ref{thm:main} can be extended to some value $\mu > \sqrt{3}-1$ if and only if Lemma~\ref{shell} can be extended to this value $\mu$.
Nevertheless, from the continuity of the functions in the proof of Lemma~\ref{shell}, it follows that there is some $\mu_0 > \sqrt{3}-1$ such that the lemma holds for
any $\mu \in (\sqrt{3}-1,\mu_0]$. Nevertheless, we cannot extend the proof for all $\mu < 1$ due to numeric problems.
\end{rem}

Remark~\ref{rem:2} readily implies Remark~\ref{rem:3}.

\begin{rem}\label{rem:3}
There is some $\mu_0 > \sqrt{3}-1$ such that if $\mu \in (\sqrt{3}-1,\mu_0]$, and $\F$ is a $\mu$-arrangment of finitely many disks, then the total area of the disks is
\[
T \leq \frac{4\cdot \arccos({\frac{1+\mu}{2})}}{(1+\mu)\cdot \sqrt{(3+\mu)(1-\mu)}}\area(I(\mathcal{F}))+\area(O(\mathcal{F})),
\]
with equality if and only if every free digon in $\F$ is thick.
\end{rem}

\begin{conj}
The statement in Remark~\ref{rem:3} holds for any $\mu$-arrangement of finitely many disks with $\sqrt{3}-1 < \mu < 1$.
\end{conj}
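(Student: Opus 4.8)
The plan is to reduce the conjecture entirely to an extension of Lemma~\ref{shell} to the full range $\sqrt{3}-1<\mu<1$, exactly as anticipated in Remark~\ref{rem:2}. Once $\mu>\sqrt{3}-1$, Remark~\ref{rem:1} (a consequence of Lemma~\ref{nocore}) gives $C(\F)=\emptyset$ for every $\mu$-arrangement $\F$, so no core polygons occur in the decomposition of $U(\F)$ constructed in Section~\ref{sec:proof}: $U(\F)$ is partitioned into the outer shell $O(\F)$ and the shell triangles, whose union is $I(\F)$. Hence the argument of Section~\ref{sec:proof} applies verbatim with the core-polygon part (and the use of Lemma~\ref{core}) simply absent --- summing the estimate of Lemma~\ref{shell} over all shell triangles and adding $\area(O(\F))$ yields the inequality of Remark~\ref{rem:3}, and the equality discussion is unchanged, since equality in Lemma~\ref{shell} forces each shell triangle to be isosceles with legs $\rho$ and base $(1+\mu)\rho$, while the converse direction rests only on Lemma~\ref{3}, which holds for all $\mu$. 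Thus it suffices to prove, for every $\mu\in(\sqrt{3}-1,1)$, that the ratio $R(\rho,\mu)$ of $\frac12\alpha_i\rho_i^2+\frac12\alpha_j\rho_j^2$ to $\Delta$ occurring in Lemma~\ref{shell}, normalized by $\rho_i=1$, $\rho_j=\rho$ and (using Lemma~\ref{shell_prelim}, whose proof imposes no restriction on $\mu$) $|x_i-x_j|=1+\mu\rho$, satisfies $R(\rho,\mu)<R(1,\mu)=\frac{4\arccos\frac{1+\mu}{2}}{(1+\mu)\sqrt{(1-\mu)(3+\mu)}}$ for all $\rho\in(0,1)$, with equality only at $\rho=1$.

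For small $\rho$ I would reuse Case~1 of the proof of Lemma~\ref{shell}: the covering of $A_i\cap A_j$ and $(A_i\cup A_j)\setminus T$ by the rectangle over $S$ gives $R(\rho,\mu)\le 1+\frac{2(1-\mu)\rho}{1+\mu\rho}$ with no hypothesis on $\mu$, so it remains to fix a threshold $\rho_0\in(0,1)$ with $1+\frac{2(1-\mu)\rho_0}{1+\mu\rho_0}<R(1,\mu)$. Writing $\mu=1-\varepsilon$, a Taylor expansion gives $R(1,\mu)=1+\tfrac23\varepsilon+O(\varepsilon^2)$, whereas the left-hand side with $\rho_0=\tfrac15$ equals $\frac{7-\mu}{5+\mu}=1+\tfrac13\varepsilon+O(\varepsilon^2)$; so the gap is positive near $\mu=1$, and away from $\mu=1$ it closes by monotonicity of the relevant one-variable functions of $\mu$ on a few explicit subintervals of $(\sqrt{3}-1,1)$ (possibly shrinking $\rho_0$ near $\mu=\sqrt{3}-1$, where $R(1,\mu)$ is smallest). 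This handles $\rho\in(0,\rho_0]$.

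The genuine obstacle is the range $\rho_0\le\rho\le 1$, where one must show $R(\cdot,\mu)$ is strictly increasing; this is precisely where the authors' grid certificate fails, because the triangle of sides $1,\rho,1+\mu\rho$ degenerates to a segment as $\mu\to1$ (its excess $1+\rho-(1+\mu\rho)=\rho(1-\mu)$ vanishes), so $\Delta$ and the angles are $O(\sqrt{1-\mu})$ and the quantity $h=f'_\rho g-g'_\rho f$ of the proof of Lemma~\ref{shell} is $O((1-\mu)^2)$, swamped by the arithmetic error. My proposed remedy is to renormalize first: since $R(\rho,\mu)-1=O(1-\mu)$ uniformly (by the Case~1 bound), the function
\[
P(\rho,\mu):=\frac{R(\rho,\mu)-1}{1-\mu}=\frac{\rho}{1+\mu\rho}+\frac{(\alpha_i-\sin\alpha_i)+\rho^2(\alpha_j-\sin\alpha_j)}{(1-\mu)\,\rho\,\sin(\angle x_iv x_j)}
\]
(the identity following from the Law of Sines and Heron's formula) is bounded, extends continuously to $\mu=1$, and satisfies $\partial_\rho R=(1-\mu)\,\partial_\rho P$, so that $R(\cdot,\mu)$ increasing on $[\rho_0,1]$ is equivalent to $\partial_\rho P>0$ there, with $\partial_\rho P$ now $O(1)$ and free of catastrophic cancellation. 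Concretely I would (i) establish an explicit positive lower bound for $\partial_\rho P$ on a strip $[\rho_0,1]\times[1-\delta,1)$ near the degenerate edge, by combining the closed-form limit $\partial_\rho P(\cdot,1)$ --- which I expect to be positive on $[\rho_0,1)$ by a short analytic estimate --- with an explicit Lipschitz bound of $\partial_\rho P$ in $\mu$; and (ii) on the compact box $[\rho_0,1]\times[\sqrt{3}-1,1-\delta]$ carry out a rigorous interval-arithmetic evaluation of $\partial_\rho P$ on a fine grid together with explicit bounds on its gradient, exactly in the style of Case~2 of the proof of Lemma~\ref{shell} but for the well-conditioned quantity. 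The main difficulty is not conceptual but a matter of computational discipline: performing the renormalization cleanly enough that $\partial_\rho P$ is manifestly regular up to $\mu=1$ and verifiable numerically, and pinning down a concrete positive lower bound for $\partial_\rho P(\cdot,1)$ on $[\rho_0,1)$. The equality analysis then comes for free --- strict monotonicity of $R(\cdot,\mu)$ away from $\rho=1$, together with the equality case at $\rho=1$ and the strict monotonicity in Lemma~\ref{shell_prelim}, reproduces the characterization of Lemma~\ref{shell}, and hence the conclusion ``every free digon in $\F$ is thick'' in Remark~\ref{rem:3} by the argument of Section~\ref{sec:proof}.
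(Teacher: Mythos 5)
The statement you are proving is labelled a \emph{Conjecture} in the paper: there is no proof to compare against, and the authors state explicitly (Remark~\ref{rem:2}) that the whole question reduces to extending Lemma~\ref{shell} to $\mu\in(\sqrt{3}-1,1)$ and that they \emph{cannot} do this ``due to numeric problems.'' Your reduction is therefore correct but is not new --- it is precisely Remark~\ref{rem:2} combined with Remark~\ref{rem:1} --- and what remains is exactly the open part. Your renormalization $P=(R-1)/(1-\mu)$ is a genuinely good idea (the identity you state does check out, and $P$ is indeed $O(1)$ as $\mu\to1$ since both $(\alpha_i-\sin\alpha_i)+\rho^2(\alpha_j-\sin\alpha_j)$ and $(1-\mu)\rho\sin(\angle x_ivx_j)$ are of order $(1-\mu)^{3/2}$), and it plausibly removes the catastrophic cancellation that defeats the authors' grid certificate. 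But as written the proposal is a research program, not a proof: the two load-bearing claims --- that $\partial_\rho P(\cdot,1)>0$ on $[\rho_0,1)$ (``which I expect to be positive by a short analytic estimate'') and that a Lipschitz-plus-grid certificate succeeds on the compact box (``I would carry out'') --- are announced rather than established, and they are exactly where the difficulty of the conjecture lives. Until $\partial_\rho P(\cdot,1)$ is written in closed form and its positivity proved with an explicit lower bound, the argument has no content beyond the authors' own Remark~\ref{rem:2}.

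There is also a concrete secondary gap in your small-$\rho$ case. The paper's Case~1 argument shows $R(1,\mu)-\frac{7-\mu}{5+\mu}>0$ on $[0,\sqrt{3}-1]$ by using that $\mu\mapsto\frac{7-\mu}{5+\mu}\sqrt{(1-\mu)(3+\mu)}$ is \emph{increasing} there; on $(\sqrt{3}-1,1)$ this function is eventually decreasing (it tends to $0$ as $\mu\to1$), so that monotonicity argument does not transfer, and your Taylor expansion at $\mu=1$ only controls a neighbourhood of $\mu=1$. The middle range of $\mu$ (and the dependence of the threshold $\rho_0$ on $\mu$) is waved away as ``monotonicity on a few explicit subintervals,'' which must actually be exhibited --- note that shrinking $\rho_0$ enlarges the box on which the hard Case-2 verification must be performed, so the two cases cannot be tuned independently. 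In short: the structure of your reduction is sound and the renormalization is a promising contribution, but the statement remains unproved because every analytically or computationally hard step is deferred.
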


Let $0 < \mu < 1$ and let $\F = \{ K_i : i=1,2,\ldots \}$ be a generalized Minkowski arrangement of order $\mu$ of homothets of an origin-symmetric convex body in $\Re^d$ with positive homogeneity. Then we define the (upper) density of $\F$ with respect to $U( \F)$ as
\[
\delta_U(\F) = \limsup_{R \to \infty} \frac{\sum_{B_i \subset R \BB^2} \area \left( B_i \right)}{\area \left( \bigcup_{B_i \subset R \BB^2} B_i \right)}.
\]
Clearly, we have $\delta(\F) \leq \delta_U(\F)$ for any arrangement $\F$.

Our next statement is an immediate consequence of Theorem~\ref{thm:main} and Remark~\ref{rem:3}.

\begin{cor}\label{cor:infinite}
There is some value $\sqrt{3}-1 < \mu_0 < 1$ such that for any $\mu$-arrangement $\F$ of Euclidean disks in $\Re^2$, we have
\[
\delta_U(\F) \leq \left\{ \begin{array}{l} \frac{2\pi }{\sqrt{3}(1+\mu)^2}, \hbox{ if } 0 \leq \mu \leq \sqrt{3}-1, \hbox{and}\\
\frac{4\cdot \arccos({\frac{1+\mu}{2})}}{(1+\mu)\cdot \sqrt{(3+\mu)(1-\mu)}}, \hbox{ if } \sqrt{3}-1 < \mu \leq \mu_0.
\end{array}
\right.
\]
\end{cor}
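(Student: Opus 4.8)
The plan is to deduce Corollary~\ref{cor:infinite} from Theorem~\ref{thm:main} and Remark~\ref{rem:3} by a standard limiting argument, essentially the one used by Fejes T\'oth in \cite{agg} to pass from the finite area estimate to the infinite density estimate. Fix $\mu$ in the relevant range and let $\F$ be a $\mu$-arrangement of disks in $\Re^2$. First I would, for each $R>0$, consider the finite subfamily $\F_R = \{ B_i \in \F : B_i \subset R\BB^2 \}$, which is itself a $\mu$-arrangement of finitely many disks, so Theorem~\ref{thm:main} (if $\mu \le \sqrt{3}-1$) or Remark~\ref{rem:3} (if $\sqrt{3}-1 < \mu \le \mu_0$) applies to $\F_R$. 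In the first case this gives
\[
\sum_{B_i \subset R\BB^2} \area(B_i) \le \frac{2\pi}{\sqrt{3}(1+\mu)^2}\area(C(\F_R)) + \frac{4\arccos\frac{1+\mu}{2}}{(1+\mu)\sqrt{(3+\mu)(1-\mu)}}\area(I(\F_R)) + \area(O(\F_R)),
\]
and in the second case the analogous inequality with no core term (and, by Remark~\ref{rem:1}, the core is empty there anyway).

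The key step is then to bound each of $\area(C(\F_R))$, $\area(I(\F_R))$, $\area(O(\F_R))$ by $\area(U(\F_R))$ times the appropriate fraction, or rather to observe that $C(\F_R)$, $I(\F_R)$, $O(\F_R)$ are pairwise non-overlapping with union $U(\F_R)$, so that the right-hand side is at most $\max$ of the three coefficients times $\area(U(\F_R))$. Since the coefficient of $\area(O(\F_R))$ is $1$, one checks that for $0 \le \mu \le \sqrt{3}-1$ the largest of the three coefficients is $\frac{2\pi}{\sqrt{3}(1+\mu)^2}$ — indeed $\frac{2\pi}{\sqrt3(1+\mu)^2}\ge \frac{4\arccos\frac{1+\mu}{2}}{(1+\mu)\sqrt{(3+\mu)(1-\mu)}} \ge 1$, the first inequality being exactly the monotonicity fact established in the proof of Lemma~\ref{core} — while for $\sqrt{3}-1 < \mu \le \mu_0$ the core term is absent and the largest coefficient is $\frac{4\arccos\frac{1+\mu}{2}}{(1+\mu)\sqrt{(3+\mu)(1-\mu)}}$, which one verifies remains $\ge 1$ for $\mu$ not too large (shrinking $\mu_0$ if necessary). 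Hence in either case
\[
\sum_{B_i \subset R\BB^2} \area(B_i) \le c(\mu)\,\area(U(\F_R)) = c(\mu)\,\area\Bigl(\bigcup_{B_i \subset R\BB^2} B_i\Bigr),
\]
where $c(\mu)$ is the claimed bound. Dividing and taking $\limsup_{R\to\infty}$ gives $\delta_U(\F) \le c(\mu)$.

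The main obstacle is a minor bookkeeping point rather than a deep one: one must make sure that $U(\F_R) = \bigcup_{B_i \subset R\BB^2} B_i$ is exactly the set appearing in the denominator of $\delta_U$, which it is by definition, and that $C(\F_R), I(\F_R), O(\F_R)$ genuinely tile $U(\F_R)$ without overlap — this is part of the setup in Section~\ref{sec:proof} (core polygons and shell triangles cover $C(\F)$ and $I(\F)$ respectively without interstices or overlap, and the outer shell is the remaining part), so it transfers verbatim. The only real content beyond Theorem~\ref{thm:main} and Remark~\ref{rem:3} is the elementary comparison of the three coefficients, and for the range $\sqrt3-1<\mu\le\mu_0$ the observation that $\mu_0$ can be chosen small enough that the single relevant coefficient stays above $1$; both are one-line computations using the monotonicity already recorded in the proof of Lemma~\ref{core}. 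I would close by noting that, as remarked before the corollary, $\delta(\F)\le\delta_U(\F)$, so the same bounds hold for the ordinary density as well.
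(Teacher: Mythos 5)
Your argument is correct and is exactly the intended derivation: the paper merely declares the corollary "an immediate consequence of Theorem~\ref{thm:main} and Remark~\ref{rem:3}", and your write-up supplies the standard details — apply the finite estimate to $\F_R=\{B_i : B_i\subset R\BB^2\}$ (finite by positive homogeneity), use that $C(\F_R), I(\F_R), O(\F_R)$ tile $U(\F_R)$ as established in Section~\ref{sec:proof}, bound by the largest coefficient via the comparison already recorded in the proof of Lemma~\ref{core}, and pass to the $\limsup$. No gaps worth flagging.
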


For any $0 \leq \mu < 1$, let $u, v \in \Re^2$ be two unit vectors whose angle is $\frac{\pi}{3}$, and let $\F_{hex}(\mu)$ denote the family of disks of radius $(1+\mu)$ whose set of centers is the lattice $\{ k u + m v: k,m \in \mathbb{Z} \}$. Then $\F_{hex}(\mu)$ is a $\mu$-arrangement, and by Corollary~\ref{cor:infinite}, for any
$\mu \in [0,\sqrt{3}-1]$, it has maximal density on the family of $\mu$-arrangements of positive homogeneity. Nevertheless, as Fejes T\'oth observed in \cite{FTL2} (see also \cite{BSz1} or Section~\ref{sec:intro}), the same does not hold if $\mu > \sqrt{3}-1$. Indeed, an elementary computation shows that in this case $\F_{hex}(\mu)$ does not cover the plane, and thus, by adding disks to it that lie in the uncovered part of the plane we can obtain a $\mu$-arrangement with greater density.

Fejes T\'oth suggested the following construction to obtain $\mu$-arrangements with large densities.
Let $\tau > 0$ be sufficiently small, and, with a little abuse of notation, let $\tau \F_{\hex}(\mu)$ denote the family of the homothetic copies of the disks in $\F_{\hex}(\mu)$ of homothety ratio $\tau$ and the origin as the center of homothety. Let $\F^1_{\hex}(\mu)$ denote the $\mu$-arrangement obtained by adding those elements of $\tau \F_{\hex}(\mu)$ to $\F_{\hex}(\mu)$ that do not overlap any element of it. Iteratively, if for some positive integer $k$, $\F^k_{\hex}(\mu)$ is defined, then let $\F^{k+1}_{\hex}(\mu)$ denote the union of $\F^k_{\hex}(\mu)$ and the subfamily of those elements of $\tau^{k+1} \F_{\hex}(\mu)$ that do not overlap any element of it. Then, as was observed also in \cite{FTL2}, choosing suitable values for $\tau$ and $k$, the value of $\delta_U(\F_{\hex}(\mu))$ can be approximated arbitrarily well by $\delta(\F^k_{\hex}(\mu))$. We note that the same idea immediately leads to the following observation.

\begin{rem}
The supremums of $\delta(\F)$ and $\delta_U(\F)$ coincide on the family of the $\mu$-arrangements $\F$ in $\Re^2$ of positive homogeneity.
\end{rem}

We finish the paper with the following conjecture.
%, and note that an affirmative answer to Conjecture~\ref{conj:2} readily yields an affirmative answer to Conjecture~\ref{conj:3}. Bocsánat: ugye pont most írtuk, hogy a kettő ekvivalens.

%\begin{conj}\label{conj:2}
%For any $\mu \in (\sqrt{3}-1, 1)$ and any $\mu$-arrangement $\mathcal{F}$ in $\Re^2$, we have $\delta_U(\mathcal{F}) \leq \delta_U(\F_{\hex}(\mu))$.
%\end{conj}

\begin{conj}\label{conj:3}
For any $\mu \in (\sqrt{3}-1, 1)$ and any $\mu$-arrangement $\mathcal{F}$ in $\Re^2$, we have $\delta(\mathcal{F}) \leq \delta_U(\F_{\hex}(\mu))$.
\end{conj}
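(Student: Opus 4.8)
We sketch a possible approach to Conjecture~\ref{conj:3}, together with the reason it is left open. Fix $\mu \in (\sqrt{3}-1,1)$ and abbreviate the constant of Lemma~\ref{shell} by
\[
c(\mu) = \frac{4\arccos\frac{1+\mu}{2}}{(1+\mu)\sqrt{(3+\mu)(1-\mu)}},
\]
noting that $c(\mu)>1$ on this range. A natural plan is to prove the formally stronger inequality $\delta_U(\F)\le\delta_U(\F_{\hex}(\mu))$ for every $\mu$-arrangement $\F$; this implies the conjecture, since $\delta(\F)\le\delta_U(\F)$ and since, as observed in Section~\ref{sec:remarks}, $\delta(\F^{\infty}_{\hex}(\mu))=\delta_U(\F_{\hex}(\mu))$, where $\F^{\infty}_{\hex}(\mu)$ is the limit of Fejes T\'oth's iterated fillings. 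As usual one first reduces to finite arrangements, truncating $\F$ to a large disk and letting its radius go to infinity so that boundary contributions vanish in the limit.

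For finite $\F$ and $\mu>\sqrt{3}-1$, Remark~\ref{rem:1} gives $C(\F)=\emptyset$, so $\area(I(\F))+\area(O(\F))=\area(U(\F))$, and the content of Theorem~\ref{thm:main} is the estimate of Remark~\ref{rem:3}, namely $T\le c(\mu)\,\area(I(\F))+\area(O(\F))$. The first step is thus to remove the restriction $\mu\le\mu_0$ from Lemma~\ref{shell}, equivalently from Remark~\ref{rem:3}. By Remark~\ref{rem:2} this is purely a matter of making the numerical part of Case~2 in the proof of Lemma~\ref{shell} rigorous all the way up to $\mu\to1^-$: for instance by replacing the finite grid evaluation of $h(\rho,\mu)$ by a verified interval-arithmetic bound, or by isolating a monotonicity or convexity property in $\rho$ of $f(\rho,\mu)/g(\rho,\mu)$ that persists as $\mu\to1$. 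Granting this, the estimate above and $\area(I)+\area(O)=\area(U)$ give at once $\delta_U(\F)\le c(\mu)$; but this falls short, because $\delta_U(\F_{\hex}(\mu))$ is a \emph{proper} convex combination of $c(\mu)$ and $1$, with weights proportional to the inner- and outer-shell areas of $\F_{\hex}(\mu)$, and is therefore strictly smaller than $c(\mu)$.

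Closing this gap is the crux and requires a genuinely sharper \emph{local} estimate. In the decomposition of $U(\F)$ the inner shell carries local density ratio at most $c(\mu)$, attained exactly on the shell triangles of $\F_{\hex}(\mu)$ by the equality case of Lemma~\ref{shell}, while the outer shell carries ratio exactly $1$; so $\delta_U(\F)\le\delta_U(\F_{\hex}(\mu))$ would follow from the (localized) assertion that no $\mu$-arrangement has a smaller ratio of outer-shell to inner-shell area than $\F_{\hex}(\mu)$. One would therefore want a finer decomposition of $U(\F)$ whose local density ratio is everywhere at most $\delta_U(\F_{\hex}(\mu))$ and which is tight on $\F_{\hex}(\mu)$ --- for example one straddling the boundary of $U(\F)$, attaching to each shell triangle $[x_i,x_j,q]$ the two circular sectors of $O(\F)$ flanking the boundary vertex $q$, and proving a single combined inequality whose unique extremal configuration is the corresponding ``petal'' of $\F_{\hex}(\mu)$ (a direct computation confirms that $\F_{\hex}(\mu)$ does realize equality in such a combined estimate). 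An alternative would be a weight function $w\colon\Re^2\to[0,\infty)$, supported near $U(\F)$, against which the area of each disk is balanced \emph{locally} by $\delta_U(\F_{\hex}(\mu))$ times the weighted area, in the spirit of the localization arguments used for extremal circle packings.

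The main obstacle is structural. The conjectured extremizer $\F^{\infty}_{\hex}(\mu)$ is self-similar --- it is $\F_{\hex}(\mu)$ with every residual hole filled by a scaled copy of the whole pattern, iterated indefinitely --- and in it the density ratio equals $\delta_U(\F_{\hex}(\mu))$ only after averaging over all scales; no finite cluster of its disks realizes that value. Hence a purely local, single-cell estimate of the kind used for $\mu\le\sqrt{3}-1$ cannot be sharp here, and one seems forced either to carry out the averaging over scales explicitly --- bounding, for an arbitrary $\F$, the density contributed by each ``generation'' of a hierarchical decomposition of $U(\F)$ into the holes of coarser generations and summing a geometric-type series --- or to find a global rearrangement reducing an arbitrary $\mu$-arrangement to a self-similar competitor without decreasing its density. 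Together with the still-unresolved numerical extension of Lemma~\ref{shell} to all $\mu<1$, needed merely to begin, this is why the statement is only conjectured.
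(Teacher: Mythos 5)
The statement you were asked to prove is Conjecture~\ref{conj:3}, which the paper leaves open and for which it offers no proof; there is therefore no argument of the authors to compare yours against, and the only question is whether your text itself constitutes a proof. It does not, and you say as much. Every load-bearing step is deferred rather than carried out: the extension of Lemma~\ref{shell} (hence of Remark~\ref{rem:3}) to all $\mu<1$ is merely ``granted''; the combined shell-triangle-plus-sector (``petal'') inequality that would replace the constant $c(\mu)$ by the strictly smaller bound $\delta_U(\F_{\hex}(\mu))$ is only postulated --- no candidate inequality is written down, nothing is proved about it, and no reason is given why its local equality case should be compatible with a global extremizer; and the multi-scale averaging over ``generations'' of a hierarchical decomposition is described in a single sentence with no definition of the decomposition and no control of the claimed geometric-type series. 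None of these steps is routine; the last two are precisely where the difficulty of the conjecture lives.

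That said, the analysis you do carry out is sound and correctly locates the obstructions. Your observation that
\[
\delta_U(\F_{\hex}(\mu)) \;=\; \frac{c(\mu)\,\area(I)+\area(O)}{\area(I)+\area(O)}
\]
computed on the hexagonal arrangement (using $C=\emptyset$ for $\mu>\sqrt{3}-1$ and the equality case of Lemma~\ref{shell}) is a proper convex combination of $c(\mu)$ and $1$, hence strictly less than $c(\mu)$, is correct, and it shows exactly why Remark~\ref{rem:3} and Corollary~\ref{cor:infinite} cannot be bootstrapped into the conjecture. Likewise, the point that the near-extremal arrangements $\F^k_{\hex}(\mu)$ approach the conjectured density only after averaging over all scales, so that no single-cell estimate in the style of Lemmas~\ref{shell} and~\ref{core} can be sharp, is a genuine structural insight consistent with the construction the paper describes in Section~\ref{sec:remarks}. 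But identifying the gap is not the same as closing it: as a proof the submission is incomplete at every essential point, and what it establishes beyond the paper's own remarks is an explanation of the difficulty, not a resolution of it.
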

}

{
\noindent
\textbf{Acknowledgments.}\\
The authors express their gratitude to K. Bezdek for directing their attention to this interesting problem, and to two anonymous referees for many helpful suggestions.
}


\begin{thebibliography}{99}

\bibitem{BHW} U. Betke, M. Henk and J.M. Wills, \emph{Finite  and  infinite  packings}, J. reine und angew. Math. \textbf{453} (2009), 165-192.

\bibitem{BL1} K. Bezdek and Z. L\'angi, \emph{Density bounds for outer parallel domains of unit ball packings}, Proc. Steklov Inst. Math. \textbf{288} (2015), 209-225.

\bibitem{BL2} K. Bezdek and Z. L\'angi, \emph{Bounds for totally separable translative packings in the plane}, Discrete Comput. Geom. \textbf{63} (2020), 49-72.

\bibitem{BO} M.N. Bleicher and J.M. Osborn, \emph{Minkowskian distribution of convex discs}, Acta Math. Acad. Sci. Hungar. \textbf{18} (1967), 5–17.

\bibitem{BSz1} K. Böröczky and L. Szabó, \emph{Minkowski arrangements of circles in the plane}, Rend. Circ. Mat. Palermo Serie II Suppl. \textbf{70} (2002), 87–92.

\bibitem{BSz2} K. Böröczky and L. Szabó, \emph{Minkowski arrangements of spheres}, Monatsh. Math. \textbf{141} (2004), 11–19.

\bibitem{FTL} L. Fejes Tóth, \emph{Minkowskian distribution of discs}, Proc. Amer. Math. Soc. \textbf{16} (1965), 999–1004. 

\bibitem{FTL2} L. Fejes Tóth, \emph{Packings and coverings in the plane}, in: Proc. Colloq. Convexity. (Copenhagen, 1965) (1967), pp 78–87.

\bibitem{agg} L. Fejes Tóth, \emph{Minkowskian Circle-Aggregates}, Math. Ann. \textbf{171} (1967), 97–103.

\bibitem{FTL3} L. Fejes Tóth, \emph{Minkowski circle packings on the sphere}, Discrete Comput. Geom. \textbf{22} (1999), 161–166.

\bibitem{foldvari} V. Földvári, \emph{Bounds on convex bodies in pairwise intersecting Minkowski arrangement of order $\mu$}, J. Geom. \textbf{111} (2020), 27.

\bibitem{Lek} P.M. Gruber and C.G. Lekkerkerker, \emph{Geometry of numbers}, North-Holland Mathematical Library \textbf{37}, 2nd edition, Elsevier Science Publishers, New York, 1987.

\bibitem{KKM} B. Knaster, C. Kuratowski and S. Mazurkiewicz, \emph{Ein Beweis des Fixpunktsatzes f\"ur n-dimensionale Simplexe}, Fundam. Math. \textbf{14} (1929), 132–137.

\bibitem{molnar} J. Molnár, \emph{Aggregati di cerchi di Minkowski}, Ann. Mat. Pura Appl. \textbf{71} (1966), 101–107.

\bibitem{NS} M. Naszódi and K. Swanepoel, \emph{Arrangements of homothets of a convex body II}, Contrib. Discrete Math. \textbf{13} (2018), 116–123. 

\bibitem{NPS} M. Naszódi, J. Pach and K. Swanepoel, \emph{Arrangements of homothets of a convex body}, Mathematika \textbf{63} (2017), 696–710.

\bibitem{oler} N. Oler, \emph{An inequality in the geometry of numbers}, Acta Math. \textbf{105} (1961), 19-48.

\bibitem{polyanskii} A. Polyanskii, \emph{Pairwise intersecting homothets of a convex body}, Discrete Math. \textbf{340} (2017), 1950–1956.

\bibitem{SW} J. Solymosi and C. Wong, \emph{An application of kissing number in sum-product estimates}, Acta Math. Hungar. \textbf{155} (2018), 47–60.

\bibitem{wolansky} G. Wolansky, \emph{A discrete approximation of the entropy functional for probability measure on the plane}, Nonlinear analysis and optimization, 283–288, Contemp. Math., 659, Amer. Math. Soc., Providence, RI, 2016.

\bibitem{zassenhaus} H.J.  Zassenhaus, \emph{Modern  developments  in  the  geometry  of  numbers},  Bull.  Amer.  Math.  Soc. \textbf{67} (1961), 427-439.

\end{thebibliography}
\end{document}